\@date \else {\vskip3ex \centering\footnotesize\@date\par\vskip1ex}\fi
\else \@footnotetext{\@setdate}\fi}
\newsavebox\myboxA
\newsavebox\myboxB
\newlength\mylenA
\newcommand*\xoverline[2][0.75]{%
    \sbox{\myboxA}{$\m@th#2$}%
    \setbox\myboxB\null% Phantom box
    \ht\myboxB=\ht\myboxA%
    \dp\myboxB=\dp\myboxA%
    \wd\myboxB=#1\wd\myboxA% Scale phantom
    \sbox\myboxB{$\m@th\overline{\copy\myboxB}$}%  Overlined phantom
    \setlength\mylenA{\the\wd\myboxA}%   calc width diff
    \addtolength\mylenA{-\the\wd\myboxB}%
    \ifdim\wd\myboxB<\wd\myboxA%
       \rlap{\hskip 0.5\mylenA\usebox\myboxB}{\usebox\myboxA}%
    \else
        \hskip -0.5\mylenA\rlap{\usebox\myboxA}{\hskip 0.5\mylenA\usebox\myboxB}%
    \fi}
 \author{Victor Vila\c{c}a Da Rocha}
\address{Laboratoire de Math\'ematiques Jean Leray, Universit\'e de Nantes, UMR CNRS 6629, \newline
2, rue de la Houssini\`ere, 44322 Nantes Cedex 03, France}
\email{victor.vilaca-da-rocha@univ-nantes.fr}
\date{September 14, 2016}
\title[Modified scattering and beating effect for Schr\"{o}dinger systems on product spaces]
{Modified scattering and beating effect for coupled Schr\"{o}dinger systems on product spaces with small initial data}
\newtheorem{thm}{Theorem}[section]
\newtheorem{lem}[thm]{Lemma}
\newtheorem{rem}[thm]{Remark}
\newtheorem{cor}[thm]{Corollary}
\DeclareMathOperator{\sgn}{sgn}
\numberwithin{equation}{section}  %numéroter équation en fonction de section, par exemple eq 2.1
\begin{document}

\begin{abstract}
In this paper, we study a coupled nonlinear Schr\"{o}dinger system with small initial data in a product space.
We establish a modified scattering of the solutions of this system and we construct a modified wave operator.
The study of the resonant system, which provides the asymptotic dynamics, allows us to highlight a control of
the Sobolev norms and interesting dynamics with the beating effect.
The proof uses a recent work of Hani, Pausader, Tzvetkov and Visciglia for the modified scattering, and a recent
work of Gr\'ebert, Paturel and Thomann for the study of the resonant system.
\end{abstract}

\subjclass[2010]{35Q55}
\keywords{Modified scattering, Modified wave operator, Beating effect, Nonlinear Schr\"{o}dinger equation.}
\thanks{Work partially supported by the grant ANA\'E, ANR-13-BS01-0010-03.}

\maketitle

\section{Introduction}

The purpose of this paper is to study the asymptotic behavior of the following cubic defocusing coupled nonlinear Schr\"odinger system with small initial data:
\begin{equation}\begin{dcases}
i\partial_tU+\Delta_{\mathbb{R}\times\mathbb{T}}U&=\left|V\right|^2U, \quad (t,x,y)\in [0,+\infty)\times\mathbb{R}\times\mathbb{T} \\ 
i\partial_tV+\Delta_{\mathbb{R}\times\mathbb{T}}V&=\left|U\right|^2V,
\end{dcases}\label{sys}\end{equation}
where $\mathbb{T}=\mathbb{R}/2\pi\mathbb{Z}$ is the one dimensional torus and $U,V$ are complex valued functions on the spatial product space 
$\mathbb{R}\times\mathbb{T}$.
We construct some solutions of this system which exhibit a modified scattering to solutions of a nonlinear resonant system.
Thanks to the nonlinearity, we construct solutions which exhibit a beating effect, namely a transfer of energy between two different modes of the couple of 
solutions. This is a genuine nonlinear behavior.

\subsection{Motivations and background}

When dealing with small solutions of nonlinear evolution equations, there are usually two main axes of research. 
The first one is the linear approach. The idea is to show that for small initial data, the solutions of these equations tend to be close to solutions of the
associated linear equations.
The second way to analyze these equations is the nonlinear approach. This is the approach we choose in this paper. 
This time, the goal is to find some solutions with a nonlinear behavior, i.e a behavior that doesn't exist for the linear equation. 
From this point of view, we can expect a large range of results, depending on the kind of nonlinear behavior we want to highlight. 
First, we can expect different kinds of nonlinear behavior.
For example, we can expect some growth of the Sobolev norms, or even a blow-up of the solutions. 
We can also expect some interaction between the modes and frequencies of the solutions (e.g. some exchange of energy), existence of solitons... 
Then, nonlinear effects can appear on different time scales.
This can be finite time behavior, large but finite time behavior depending of the size of the initial data, or even infinite time behavior.
Finally, we can make a new dichotomy with the way the solutions are enjoying the behavior: 
are the solutions presenting themselves the nonlinear behavior, or are they scattering to solutions of other equations which present this behavior ?

\bigskip

From the point of view of the search of instability, the cubic nonlinear Schr\"odinger equation is a perfect candidate due to this cubic nonlinearity that 
offers some resonant opportunities. Physically, the cubic nonlinear Schr\"odinger equation
\begin{equation*}
 i\partial_tU+\frac12\Delta U= \lambda |U|^2U,
\end{equation*}
with $\lambda\in\mathbb{R}$, appears in a large range of phenomena, like for example, the propagation of light in nonlinear optical fibers,
the Bose-Einstein condensates theory, the gravity waves, the water waves, the plasma oscillations in magnetohydrodynamics\ldots
This duality between the physical and mathematical interest makes this equation one of the most studied and one of the most important models in nonlinear science.
The kind of nonlinear behavior we can obtain depends on the geometry of the spatial domain. Let us present some of these nonlinear results.

For Euclidean spaces, we can show that the solutions exist globally, are decreasing and exhibit some kind of scattering to free solutions.
We refer to Kato and Pusateri in~\cite{KP} for the critical case of the space $\mathbb{R}$ (in this case the scattering result is modified), 
and to Hayashi and Naumkin in \cite{HN} for the more general case of the Euclidean space~$\mathbb{R}^n$. Both results are extensions of initial works of 
Ozawa for the one dimensional case (\cite{Oza91}), and Ginibre and Ozawa for the higher dimensions (\cite{GinOza93}).

For hyperbolic spaces, they are also a lot of scattering results for the cubic Schr\"odinger equation.
For example, Banica, Carles and Staffilani show in \cite{BCS} that we have a scattering behavior and a wave operator in~$H^1(\mathbb{H}^3)$.
Higher dimension cases are treated, for smaller nonlinearities and always in $H^1$, by Banica, Carles and Duyckaerts with radial assumptions (\cite{BanCarDuy09}), 
or by Ionescu, Pausader and Staffilani without radial assumptions~(\cite{IonPauSta12}).

For compact domains, in the case of the torus, we can use the resonances between the modes of the solutions to exhibit some growth of the Sobolev norms
(see Colliander, Keel, Staffilani, Takaoka and Tao in the case of the torus $\mathbb{T}^2$ in \cite{CKSTT}). We can also use these resonances to see that
the equation is strongly illposed on the Sobolev spaces $H^s$, with $s<0$, in the torus $\mathbb{T}^d$ for $d\geq2$ (see Carles and Kappeler in \cite{CK});
or to see the equation is instable with respect to the initial value (see Carles, Dumas and Sparber in \cite{CDS}).

Finally, for product spaces, we can expect to mix some of these geometric domains to obtain two different kinds of nonlinearity at the same time.
The initial idea of Tzvetkov and Visciglia in \cite{TV} is to study the equation on the space $\mathbb{R}^n\times\mathcal{M}$, with $\mathcal{M}$
a compact Riemannian manifold. They show that the solutions exist globally and scatter to free solutions for small initial data. 
The two authors extend their result to the large data case with the compact manifold $\mathcal{M}=\mathbb{T}$ in \cite{TV2}. They obtain again a 
global existence and a scattering behavior. As in the Euclidean spaces case, the problem is here that solutions of the cubic Schr\"odinger equation
scatter to free solutions, which prevents from all the compact-kind nonlinear behavior such as a growth of the Sobolev norms.

\bigskip

From the point of view of the product space, Hani, Pausader, Tzvetkov and Visciglia in \cite{HPTV} avoid this problem by considering the space 
$\mathbb{R}\times\mathbb{T}^d$, for $1\leq d\leq4$. The idea is to keep a direction of diffusion with $\mathbb{R}$ to enable scattering results
and to add the compact manifold $\mathbb{T}^d$ to obtain interesting nonlinear behaviors.
The result here is very interesting because they show that, for small initial data, 
the solutions of the cubic Schr\"odinger equation scatter to solutions of another equation called the resonant equation, instead of free solutions.
They also obtain a modified wave operator from this resonant equation to the cubic Schr\"odinger equation.
As a consequence, we can expect a whole new range of nonlinearity mixing the scattering theory from the Euclidean part, and all the kind of nonlinearity we 
can highlight for the resonant equation. In particular, they show how to transfer solutions of a reduced resonant equation on the torus $\mathbb{T}^d$ to solutions
of the resonant equation. Therefore, we can expect all 'torus-kind' of nonlinearity (thanks to the resonances) for the resonant equation, as a growth of the Sobolev
norms, and thus find solutions to the the cubic Schr\"odinger equation that scatter to these solutions. 

An interesting feature of this method employed by Hani, Pausader, Tzvetkov and Visciglia, is the fact that it is adaptable. 
From example, adding a convolution potential in order to kill the resonances, Gr\'ebert, Paturel and Thomann show in \cite{GPT2} that the modified equation 
admits modified scattering and a wave operator too, but they show that all the Sobolev norms of their solutions tend to be constant.
Another example is given by Hani and Thomann in \cite{HT}. They add a harmonic trapping and obtain once again a modified scattering result and the existence of the 
modified wave operator. The important fact here is that the resonant dynamics allow them to justify and extend some physical approximations in the theory of 
Bose-Einstein condensates in cigar-shaped traps. Finally, a last example is given by Haiyan Xu in \cite{Xu}.
By modifying the equation, she establishes a scattering theory between the cubic Schr\"odinger equation on the cylinder $\mathbb{R}\times\mathbb{T}$
and the Szeg\H{o} equation. Thanks to the study of the Szeg\H{o} equation (see for example the article \cite{GerGre16} of G\'erard and Grellier), 
this allows the author to construct global unbounded solutions to this modified cubic Schr\"odinger equation.

As in the three previous examples, the goal of this paper is to transport the method of Hani, Pausader, Tzvetkov and Visciglia to another problem, the study of the
cubic coupled Schr\"odinger system. Indeed, the cubic coupled Schr\"odinger systems present some interesting dynamical properties we can present now.

\bigskip

From the mathematical point of view, the study of systems offers more possibilities than a single equation. However, the first step is often to check that we can
extend the results of the equation case study to the system case. In that optic, we can see from a general point of view on evolution systems that the presence of
a direction of diffusion seems to allow scattering. For example, for the Klein Gordon equation on product spaces, 
Hari and Visciglia in \cite{HariVisc} obtain scattering of the solutions to free solutions for small initial data.
In the case of Schr\"odinger systems, Cassano and Tarulli show in \cite{CassTaru} a scattering result and the existence of the wave operator, but this study
doesn't allow to look at the cubic case when there is just one Euclidean space direction. Let us now deal with cubic coupled Schr\"odinger systems.

As the cubic Schr\"odinger equation, the cubic coupled Schr\"odinger system presents at the same time a physical and a mathematical interest.
Physically, the cubic coupled Schr\"odinger system occurs in nonlinear optics while, for example, 
looking at two orthogonally polarized components traveling at different speeds because of different refractive indices associated with them.
We can also study the coupling between two different optical waveguides, that can be provided by a dual-core single-mode fiber. 
Another example is given by two distinct pulses with different carrier frequencies but with the same polarization. 
For more precisions on these examples, we can refer to the book \cite{AB} of G. Agrawal and R. Boyd on nonlinear optics.

Of course, the kind of result we obtain for systems depends on the geometry of the space domain. 
For Euclidean spaces, the role of the coupling is not predominant. Basically, we obtain same kind of results as in the one equation case.
As in the Hayashi-Naumkin article \cite{HN}, the Euclidean direction can provide some decrease of the $L^\infty$ norms. For example,
Donghyun Kim studies in \cite{Kim} a cubic coupled Schr\"odinger system with different mass. He obtains the global existence and a decay of the solutions.
Another example is given by Li and Sunagawa in \cite{LiSun16}, where the authors study the decay rate of the $L^\infty$ norms for a large range of cubic 
nonlinearities, including the derivative nonlinearities.
The diffusion direction allows extension of the results of Kato and Pusateri (\cite{KP}) in the case of a cubic coupled Schr\"odinger system on $\mathbb{R}$,
with global existence, a scattering result and a decreasing of the solutions (see \cite{VVDR1}). For bigger Euclidean spaces, we can cite the recent work of 
Farah and Pastor who show global existence and scattering in $H^1(\mathbb{R}^3)$ in \cite{FaraPast}.

For the torus case, the study of systems allows to create more nonlinear behaviors by creating mixing between the different modes of the solutions.
This is the main interest of the study of the cubic coupled Schr\"odinger systems. For example, Gr\'ebert, Paturel and Thomann in \cite{GPT} highlight a beating
effect for cubic coupled Schr\"odinger systems on $\mathbb{T}$. This beating effect consists in an exchange of energy between different modes of the solution.
This surprising behavior can be observed by simple experiences, for example, with two identical clothespins on a wire. In this experience, with a small perturbation,
we can observe a beating effect between the two clothespins (see the video \cite{G_PAL}, in French).
The existence of the beating effect for the cubic coupled Schr\"odinger systems, based on a Birkhoff normal form decomposition of the Hamiltonian of the system, 
is thus proved for large but finite time.
This kind of nonlinear behavior is made possible by the mixing effect mentioned above.

\bigskip

The goal of this article is to obtain a beating effect behavior for the cubic coupled Schr\"odinger systems in infinite time.
For this purpose, two main solutions may be envisioned: the KAM theory or the use of scattering results.

Concerning the KAM theory, an example is given by the study of the quintic Schr\"odinger equation on the torus.
First, Gr\'ebert and Thomann obtained for this equation a beating effect for finite time in~\cite{GreTho12}.
Thanks to a KAM theorem, Haus and Procesi then constructed quasi-periodic solutions that exchange energy, through the beating effect, 
over an infinite time (\cite{HauPro16}). 
The advantage of this method is the fact that it allows to stay on the torus in order to obtain the infinite time behavior, 
when we need for the scattering method to add a dispersive direction.

In this paper, we choose the second method and thus consider the spatial domain $\mathbb{R}\times\mathbb{T}$. 
The goal is to use the diffusion direction to get a modified scattering result and construct a modified wave operator
by using the method of Hani, Pausader, Tzvetkov and Visciglia; and to use the compact component $\mathbb{T}$ to obtain interesting dynamics such as the beating
effect of Gr\'ebert, Paturel and Thomann. By mixing these two approaches, we obtain a couple of solutions of our system~\eqref{sys} that scatters to a solution
of a resonant system that provides a beating effect.
Moreover, we show that the construction of the modified scattering implies some dynamical consequences, such as the control of the Sobolev norms.

\subsection{The results}

In this section, we present the main theorems of the paper. 
In the following results, we denote by $\mathcal{F}H(\xi,p)=\hat{H}(\xi,p)=\hat{H}_p(\xi)$ the Fourier transform of 
$H:\mathbb{R}\times\mathbb{T}\longrightarrow\mathbb{C}$ at the point $(\xi,p)\in\mathbb{R}\times\mathbb{Z}$.
Moreover, the norm $S^+$ we consider here is a strong $L^2$ based norm introduced in Section \ref{secprel} and $N\geq12$ is an integer. 
We recall that we study the system
\begin{equation*}\begin{dcases}
i\partial_tU+\Delta_{\mathbb{R}\times\mathbb{T}}U&=\left|V\right|^2U, \quad (t,x,y)\in [0,+\infty)\times\mathbb{R}\times\mathbb{T} \\ 
i\partial_tV+\Delta_{\mathbb{R}\times\mathbb{T}}V&=\left|U\right|^2V.
\end{dcases}\end{equation*}
Due to the approach of Hani, Pausader, Tzvetkov and Visciglia in \cite{HPTV}, we want to make a
link between the behavior of the system \eqref{sys} and a resonant system we define here:
\begin{equation}\begin{dcases}
i\partial_{\tau}W_U(\tau)=\mathcal{R}\left[W_V(\tau),W_V(\tau),W_U(\tau)\right],\\
i\partial_{\tau}W_V(\tau)=\mathcal{R}\left[W_U(\tau),W_U(\tau),W_V(\tau)\right],
\end{dcases}\label{res}\end{equation}
with 
\begin{equation*}
\mathcal{F}\mathcal{R}\left[F,G,H\right](\xi,p)=\displaystyle\sum\limits_{\substack{p,q,r,s\in\mathbb{Z} \\ 
p-q+r-s=0 \\ p^2-q^2+r^2-s^2=0}}\hat{F}(\xi,q)\xoverline{\hat{G}(\xi,r)}\hat{H}(\xi,s). 
\end{equation*}
The links between these two systems are presented in Section \ref{modanswav} through the modified scattering and modified wave operator theorems. Thanks to these
theorems, we obtain the two main results of this paper.

First, by using the modified scattering operator of Theorem \ref{modscat}, we have a control of all the Sobolev norms with the following theorem:

\begin{thm}\label{thmbounded}
There exists $\varepsilon>0$ such that if $U_0,V_0\in S^+$ satisfies 
\begin{equation*}
\|U_0\|_{S^+}+\|V_0\|_{S^+}\leq\varepsilon,
\end{equation*}
and if $(U(t),V(t))$ solves \eqref{sys} with initial data $(U_0,V_0)$, then $(U,V)\in\mathcal{C}([0,+\infty):H^N)\times\mathcal{C}([0,+\infty) : H^N)$ exists 
globally and, for all $s\in\mathbb{R}$, there exists a constant $M=M(s)>0$ such that we have
\begin{equation}\label{staysmall}
\|U(t)\|_{H^s_{x,y}}+\|V(t)\|_{H^s_{x,y}}\leq M\varepsilon.
\end{equation}
More precisely, there exists a constant $c=c(s)\geq0$ such that
\begin{equation}\label{eqboundhs}
 \lim_{t\rightarrow+\infty}\left(\|U(t)\|_{H^s_{x,y}}+\|V(t)\|_{H^s_{x,y}}\right)=c.
\end{equation}
\end{thm}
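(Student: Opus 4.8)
The plan is to reduce everything to the resonant system \eqref{res} by means of the modified scattering theorem. By Theorem \ref{modscat}, for data of size $\lesssim\varepsilon$ in $S^+$ the solution $(U,V)$ of \eqref{sys} is global in $\mathcal{C}([0,+\infty):H^N)^2$ and, as $t\to+\infty$, stays close, in a norm dominating $H^N$, to $\bigl(e^{it\Delta_{\mathbb{R}\times\mathbb{T}}}W_U(\pi\ln t),\,e^{it\Delta_{\mathbb{R}\times\mathbb{T}}}W_V(\pi\ln t)\bigr)$, where $(W_U,W_V)$ solves \eqref{res} with some scattering data $(W_U^0,W_V^0)\in S^+$ of size $\lesssim\varepsilon$. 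Since $e^{it\Delta_{\mathbb{R}\times\mathbb{T}}}$ is unitary on every $H^s_{x,y}$, this yields
\[
\lim_{t\to+\infty}\Bigl(\bigl|\,\|U(t)\|_{H^s_{x,y}}-\|W_U(\pi\ln t)\|_{H^s_{x,y}}\,\bigr|+\bigl|\,\|V(t)\|_{H^s_{x,y}}-\|W_V(\pi\ln t)\|_{H^s_{x,y}}\,\bigr|\Bigr)=0
\]
at least for $s\le N$, the remaining range of $s$ being reached by persistence of regularity for \eqref{sys} together with the fact that the modified-scattering approximation holds up to the regularity controlled by $S^+$. Consequently \eqref{staysmall} and \eqref{eqboundhs} reduce to two statements about the resonant flow: (i) $\|W_U(\tau)\|_{H^s_{x,y}}$ and $\|W_V(\tau)\|_{H^s_{x,y}}$ stay $\lesssim\varepsilon$, and (ii) their sum has a limit as $\tau\to+\infty$.

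The decisive structural fact is that the torus is one-dimensional: the two constraints $p-q+r-s=0$ and $p^2-q^2+r^2-s^2=0$ force $pr=qs$, hence $\{q,s\}=\{p,r\}$, so on each fibre $\{\xi\}\times\mathbb{Z}$ the operator $\mathcal{R}$ collapses to the explicit "diagonal plus rank one" expression
\[
\widehat{\mathcal{R}[F,G,H]}(\xi,p)=\hat F(\xi,p)\sum_{r\in\mathbb{Z}}\overline{\hat G(\xi,r)}\,\hat H(\xi,r)+\hat H(\xi,p)\sum_{r\in\mathbb{Z}}\hat F(\xi,r)\,\overline{\hat G(\xi,r)}-\hat F(\xi,p)\,\overline{\hat G(\xi,p)}\,\hat H(\xi,p).
\]
Plugging this into \eqref{res} and computing $\partial_\tau\bigl(|\hat W_U(\xi,p)|^2+|\hat W_V(\xi,p)|^2\bigr)$, all the interaction terms cancel, so $|\hat W_U(\xi,p)|^2+|\hat W_V(\xi,p)|^2$ is conserved for every $(\xi,p)$. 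Summing against $\langle\xi,p\rangle^{2s}$ and integrating in $\xi$ then shows that $\|W_U(\tau)\|_{H^s_{x,y}}^2+\|W_V(\tau)\|_{H^s_{x,y}}^2$ is independent of $\tau$, equal to $\|W_U^0\|_{H^s_{x,y}}^2+\|W_V^0\|_{H^s_{x,y}}^2\lesssim\varepsilon^2$ since $S^+$ controls these norms. This proves (i), hence by the reduction above $\|U(t)\|_{H^s}+\|V(t)\|_{H^s}\le\sqrt 2\,\bigl(\|W_U^0\|_{H^s}^2+\|W_V^0\|_{H^s}^2\bigr)^{1/2}+o(1)\le M\varepsilon$, which is \eqref{staysmall}.

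For \eqref{eqboundhs} it remains to show that $\|W_U(\tau)\|_{H^s_{x,y}}$ and $\|W_V(\tau)\|_{H^s_{x,y}}$ each converge as $\tau\to+\infty$. Here I would rely on the study of the resonant system inherited from Gr\'ebert, Paturel and Thomann: on each fibre, \eqref{res} is a Hamiltonian system which, together with the conserved quantities above, is integrable, so $\sum_p\langle\xi,p\rangle^{2s}|\hat W_U(\xi,p)|^2$ is an almost-periodic function of $\tau$ whose Fourier--Bohr frequencies depend on the fibre parameter $\xi$ through the values of the conserved quantities at $\xi$. Integrating in $\xi\in\mathbb{R}$ and applying a non-stationary-phase / Riemann--Lebesgue argument (the frequencies genuinely vary in $\xi$), every oscillating contribution vanishes in the limit, so $\|W_U(\tau)\|_{H^s_{x,y}}^2$ converges to the $\xi$-integral of the time-averages, and likewise for $W_V$; taking square roots and adding produces the constant $c(s)$ in \eqref{eqboundhs}.

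I expect the main obstacle to be precisely this last point: controlling the large-$\tau$ behaviour of the resonant flow uniformly in the fibre variable $\xi$, and pinning down the $\xi$-dependence of its frequency spectrum sharply enough to run the averaging (in particular handling the fibres on which a frequency degenerates). By contrast, the reduction through modified scattering and the conservation-law computation are routine, as is the remaining bookkeeping — promoting the modified-scattering approximation and the conservation laws through the full range of Sobolev exponents — which rests on persistence of regularity for \eqref{sys} and on the diagonal form of $\mathcal{R}$.
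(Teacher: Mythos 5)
Your modified-scattering reduction and the conservation law you derive for the resonant system are essentially the paper's own argument. In dimension one $\Gamma_0$ collapses to $\{p,r\}=\{q,s\}$, $\mathcal{R}$ takes the diagonal-plus-rank-one form you write, and the cancellation giving $\partial_\tau\bigl(|\hat W_U(\xi,p)|^2+|\hat W_V(\xi,p)|^2\bigr)=0$ is exactly what the paper proves in Lemma~\ref{lemW} (there phrased for arbitrary weighted $\ell^2_p$ sums, which is equivalent to your pointwise version). Everything through \eqref{staysmall} is correct and matches Remark~\ref{rembounded}.

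Your last paragraph, however, is not what the paper does and would not work. The paper does not invoke integrability of each fibre's flow, almost-periodicity, or a Riemann--Lebesgue averaging over $\xi$ to obtain \eqref{eqboundhs}. That route is both unnecessary and unsound: there is no reason the fibre frequencies should vary non-degenerately in $\xi$ (indeed, for the data constructed in Theorem~\ref{beating}, $\varphi\equiv1$ on $I$ and the fibre dynamics is $\xi$-independent there, so your ``the frequencies genuinely vary in $\xi$'' hypothesis fails outright). What the paper actually does is immediate: Lemma~\ref{lemW} asserts that the sum of $H^s$ norms of $(W_U,W_V)$ is \emph{exactly constant} along the resonant flow, so the limit in \eqref{eqboundhs} is read off directly, with no limiting argument for the resonant dynamics at all. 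There is a minor bookkeeping slip you implicitly brush against: the computation in Lemma~\ref{lemW} (and yours) conserves the sum of \emph{squares} $\|W_U\|_{H^s}^2+\|W_V\|_{H^s}^2$, not the sum of norms, whereas the lemma's statement and \eqref{eqboundhs} are written with the plain sum. But this is a notational slip in the paper; the intended conclusion, and the one your conservation law already delivers via modified scattering and unitarity, is that $\|U(t)\|_{H^s}^2+\|V(t)\|_{H^s}^2$ converges. Your worry that the \emph{individual} norms $\|W_U(\tau)\|_{H^s}$ and $\|W_V(\tau)\|_{H^s}$ must each be shown to converge is therefore misplaced, and the averaging machinery you propose to establish it is neither needed nor justified.
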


This theorem shows that for small initial data, the solutions stay small in every Sobolev spaces. 
Moreover, we see in the second part of the theorem that the sum of the Sobolev norms of the couples of solutions tends to be constant.

\bigskip

Then, thanks to the construction of the modified wave operator in Theorem \ref{waveop}, the idea is to find some interesting nonlinear behavior of the resonant
system \eqref{res} in order to transfer this behavior to the initial system. As a consequence, we follow the strategy of \cite{GPT} to construct couples of 
solutions of the initial system \eqref{sys} which scatter to beating effect solutions of the resonant system \eqref{res}:

\begin{thm}\label{beating}
Let $I\subset\mathbb{R}$ be a bounded open interval, $(p,q)$ a couple of different integers and $0<\gamma<\frac12$.
For $\varepsilon=\varepsilon(p,q,I)>0$ small enough, there exists:
\begin{itemize}
 \item a constant $0<T_\gamma\lesssim |\ln(\gamma)|$ and a $2T_\gamma-$periodic function $K_\gamma:\mathbb{R}\rightarrow(0,1)$ such that
 \begin{equation*}
  K_\gamma(0)=\gamma,\qquad \qquad \qquad K_\gamma(T_\gamma)=1-\gamma;
 \end{equation*}
 \item a couple of solutions $(W_U,W_V)$ of the resonant system \eqref{res} which exhibits a beating effect in the following sense:
 \begin{equation*}\begin{dcases} 
\hat{W}_U(t,\xi,y)=\mathcal{F}(W_U)(t,\xi,p)e^{ipy}+\mathcal{F}(W_U)(t,\xi,q)e^{iqy}, \\
\hat{W}_V(t,\xi,y)=\mathcal{F}(W_V)(t,\xi,p)e^{ipy}+\mathcal{F}(W_V)(t,\xi,q)e^{iqy}, 
\end{dcases}\end{equation*}
and $\forall\xi\in I$,
\begin{equation*}\begin{dcases}                               
|\hat{W}_V(t,\xi,p)|^2=|\hat{W}_U(t,\xi,q)|^2=\varepsilon^2K_\gamma(\varepsilon^2t),\\
|\hat{W}_U(t,\xi,p)|^2=|\hat{W}_V(t,\xi,q)|^2=\varepsilon^2(1-K_\gamma(\varepsilon^2t));
\end{dcases}
 \end{equation*}
 \item a couple of solutions $(U,V)$ of the initial system \eqref{sys} which exhibits modified scattering to this couple $(W_U,W_V)$ in the following sense:
 \begin{equation*}\begin{dcases}                               
\|U(t)-e^{it\Delta_{\mathbb{R}\times\mathbb{T}}}W_U(\pi\ln(t))\|_{H^N(\mathbb{R}\times\mathbb{T})}&\rightarrow0 \quad \text{as} \quad t\rightarrow+\infty,\\
\|V(t)-e^{it\Delta_{\mathbb{R}\times\mathbb{T}}}W_V(\pi\ln(t))\|_{H^N(\mathbb{R}\times\mathbb{T})}&\rightarrow0 \quad \text{as} \quad t\rightarrow+\infty.
\end{dcases}
\end{equation*}
\end{itemize}
\end{thm}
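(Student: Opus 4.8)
The plan is to import the explicit beating orbit of Gr\'ebert, Paturel and Thomann \cite{GPT} into the resonant system \eqref{res}, and then to transport it to \eqref{sys} through the modified wave operator of Theorem \ref{waveop}; the three bullets come out in that order. First I would note that $\mathcal{R}$ acts diagonally in the $\xi$ variable and preserves the $y$-frequency support, so for each fixed $\xi$ the space of functions whose $y$-frequencies lie in $\{p,q\}$ is invariant under \eqref{res}. Writing $a_p(\tau)=\mathcal{F}(W_U)(\tau,\xi,p)$, $a_q(\tau)=\mathcal{F}(W_U)(\tau,\xi,q)$, $b_p(\tau)=\mathcal{F}(W_V)(\tau,\xi,p)$, $b_q(\tau)=\mathcal{F}(W_V)(\tau,\xi,q)$ and using the constraints $p-q+r-s=0$ and $p^2-q^2+r^2-s^2=0$ together with $p\neq q$ to discard the non-admissible interactions, \eqref{res} collapses to the $4$-dimensional Hamiltonian system
\begin{equation*}
\begin{dcases}
i\dot a_p=(|b_p|^2+|b_q|^2)\,a_p+b_p\overline{b_q}\,a_q,\\
i\dot a_q=(|b_p|^2+|b_q|^2)\,a_q+\overline{b_p}\,b_q\,a_p,\\
i\dot b_p=(|a_p|^2+|a_q|^2)\,b_p+a_p\overline{a_q}\,b_q,\\
i\dot b_q=(|a_p|^2+|a_q|^2)\,b_q+\overline{a_p}\,a_q\,b_p.
\end{dcases}
\end{equation*}

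This system conserves the two masses $|a_p|^2+|a_q|^2$ and $|b_p|^2+|b_q|^2$ and also $|a_p|^2-|b_q|^2$ and $|a_q|^2-|b_p|^2$, so the manifold $\{|a_p|=|b_q|,\ |a_q|=|b_p|\}$ with both masses normalised to $1$ is invariant, and on it the flow reduces --- after using the phase symmetries and the conserved Hamiltonian --- to a one-dimensional (pendulum-type) equation for the mass ratio; it is, up to an explicit gauge, the beating system of \cite{GPT}. I would then quote from there the periodic orbit $\Psi(\tau)=(a_p,a_q,b_p,b_q)(\tau)$ along which
\begin{equation*}
|a_q(\tau)|^2=|b_p(\tau)|^2=K_\gamma(\tau),\qquad |a_p(\tau)|^2=|b_q(\tau)|^2=1-K_\gamma(\tau),
\end{equation*}
where $K_\gamma:\mathbb{R}\to(0,1)$ is $2T_\gamma$-periodic with $K_\gamma(0)=\gamma$ and $K_\gamma(T_\gamma)=1-\gamma$; the bound $0<T_\gamma\lesssim|\ln\gamma|$ reflects the logarithmic slowdown of the orbit near the equilibrium $K=0$ (all of the $U$-mass on mode $p$, all of the $V$-mass on mode $q$), which the orbit approaches to within distance $\sim\gamma$ on each half-period. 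This delivers the first bullet.

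Next I would globalise in $\xi$ using the scaling invariance $(a_p,a_q,b_p,b_q)(\tau)\mapsto\lambda(a_p,a_q,b_p,b_q)(\lambda^2\tau)$ of the reduced system. Fix $\chi\in C_c^\infty(\mathbb{R})$ with $0\leq\chi\leq1$ and $\chi\equiv1$ on a neighbourhood of $I$, and for each $\xi$ solve the reduced system from the datum $\varepsilon\chi(\xi)\Psi(0)$; by scaling its solution is $\varepsilon\chi(\xi)\Psi\big(\varepsilon^2\chi(\xi)^2\tau\big)$. Declaring the four entries of this vector to be $\mathcal{F}(W_U)(\tau,\xi,p)$, $\mathcal{F}(W_U)(\tau,\xi,q)$, $\mathcal{F}(W_V)(\tau,\xi,p)$, $\mathcal{F}(W_V)(\tau,\xi,q)$, and all other $y$-Fourier coefficients of $W_U$ and $W_V$ to be zero, one gets a genuine solution of \eqref{res} (the equation holds $\xi$ by $\xi$ and mode by mode), with the announced two-mode form, and such that for $\xi\in I$ (where $\chi(\xi)=1$) the moduli are exactly $\varepsilon^2K_\gamma(\varepsilon^2\tau)$ and $\varepsilon^2(1-K_\gamma(\varepsilon^2\tau))$ as required by the second bullet. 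Since $W_U(0)$ and $W_V(0)$ are Schwartz in $x$ with $y$-frequencies in $\{p,q\}$, one has $\|W_U(0)\|_{S^+}+\|W_V(0)\|_{S^+}\leq C(p,q,I)\,\varepsilon$; choosing $\varepsilon=\varepsilon(p,q,I)$ so that the right-hand side is below the smallness threshold of Theorem \ref{waveop} and applying that theorem to the scattering data $(W_U,W_V)$ produces a global solution $(U,V)$ of \eqref{sys} with $\|U(t)-e^{it\Delta_{\mathbb{R}\times\mathbb{T}}}W_U(\pi\ln t)\|_{H^N(\mathbb{R}\times\mathbb{T})}\to0$ and $\|V(t)-e^{it\Delta_{\mathbb{R}\times\mathbb{T}}}W_V(\pi\ln t)\|_{H^N(\mathbb{R}\times\mathbb{T})}\to0$ as $t\to+\infty$, which is the third bullet.

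The hard part is Step 1: confirming that the two-mode restriction of \eqref{res} really is (modulo a trivial gauge) the Gr\'ebert--Paturel--Thomann beating system, and extracting from \cite{GPT} the precise properties of the orbit, above all the logarithmic bound $T_\gamma\lesssim|\ln\gamma|$ and the endpoint values $K_\gamma(0)=\gamma$, $K_\gamma(T_\gamma)=1-\gamma$ --- this is where the genuine dynamical-systems analysis (pendulum reduction, period estimate near the saddle) lives. A secondary subtlety is the $\xi$-globalisation: one cannot simply multiply a single orbit by $\chi(\xi)$ because the cubic nonlinearity would break the equation off $I$, so it is essential to integrate the ODE separately in each $\xi$ and let the scaling invariance do the bookkeeping, which keeps both the explicit formula and the exact beating on $I$ while still landing in the function space demanded by Theorem \ref{waveop}.
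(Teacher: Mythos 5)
Your proposal is correct and follows essentially the same route as the paper: restrict to the two-mode invariant subspace, reduce to the one-degree-of-freedom pendulum-type Hamiltonian of Gr\'ebert--Paturel--Thomann to extract the $2T_\gamma$-periodic beating orbit with the $|\ln\gamma|$ period bound, lift it to a genuine solution of the resonant system via the $\xi$-by-$\xi$ scaling $\mathcal{F}(W_U)(\tau,\xi,k)=\varphi(\xi)a_k(\varphi(\xi)^2\tau)$ built from a Schwartz cutoff $\varphi\equiv1$ on $I$, verify the $S^+$ smallness, and finish with the modified wave operator of Theorem~\ref{waveop}. The only cosmetic difference is that the paper carries out the action-angle reduction $(I_j,J_j,\theta_j,\varphi_j)\mapsto(K_0,\dots,K_3,\Psi_0,\dots,\Psi_3)$ explicitly rather than writing the $4$-dimensional ODE and its conserved quantities directly, but the content is the same.
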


With this theorem, we see the importance of both parts of the product space. On the one hand, the bounded component allows us to construct couple of
solutions with very nonlinear behavior such as this beating effect (see \cite{GPT}). 
On the other hand, the Euclidean component gives an infinite time behavior (see for example \cite{VVDR1}).
Therefore, thanks to the product space, we have here an asymptotic convergence to the beating effect, which is a new result. The counterpart of this construction
is the fact that this behavior is local in the Euclidean coordinate.

\subsection{Overview of proofs}

\subsubsection{The modified scattering and the wave operator}
According to previous results in scattering theory for Schr\"odinger equations and systems 
(\cite{BCS},\cite{CassTaru},\cite{GPT2},\cite{HPTV},\cite{HT},\cite{HariVisc},\cite{HN},\cite{KP},\cite{TV},\cite{TV2},\cite{VVDR1},\cite{Xu}),
it is relevant to introduce the profiles $(F,G)$ of the solutions $(U,V)$, which are the backwards linear evolutions of solutions to the nonlinear equations:
\begin{equation}\label{defprof}
F(t,x,y):=e^{-it\Delta_{\mathbb{R}\times\mathbb{T}}}U(t,x,y) \qquad\qquad G(t,x,y):=e^{-it\Delta_{\mathbb{R}\times\mathbb{T}}}V(t,x,y).  
\end{equation}
The system described by the profiles looks like 
\begin{equation*}\begin{dcases}
i\partial_tF(t)&=\mathcal{N}^t[G(t),G(t),F(t)],\\ 
i\partial_tG(t)&=\mathcal{N}^t[F(t),F(t),G(t)].
\end{dcases}\end{equation*}

To isolate a resonant system, the idea is to work on the structure of the nonlinearity $\mathcal{N}^t$. According to a stationary phase intuition,
we decompose the nonlinearity as 
\begin{equation*}
 \mathcal{N}^t=\frac{\pi}{t}\mathcal{R}+\mathcal{E}.
\end{equation*}
The integrable part $\mathcal{E}$ enjoys a fast decrease. The idea is to show that it doesn't play a role in the asymptotic dynamics of $F$ and $G$.
Thus, the system described by $\mathcal{R}$ is our resonant system, the system which contains all the asymptotically dynamics. The goal is thus to find interesting
dynamics such as the beating effect for this resonant system. For that purpose, we construct a new system, the reduced resonant system, which is obtained from
the resonant system by deleting the Euclidean variable. This reduced resonant system lives on the torus $\mathbb{T}$, where we have the beating effect thanks to 
\cite{GPT}. Therefore, we have to show how to transfer solutions of the reduced resonant system to solutions of the resonant system.

Thanks to the fast decrease of the $\mathcal{E}$ part of the nonlinearity, fixed-point arguments allow us to prove the existence of the modified scattering
and modified wave operators.

\subsubsection{The dynamical consequences}
Both theorems (the modified scattering and the modified wave operator) imply dynamical consequences for the initial system \eqref{sys}.

The modified scattering theorem shows that all solutions of the initial system \eqref{sys} scatter to solutions of the resonant system \eqref{res}.
Thus, proving that all the solutions of the resonant system \eqref{res} are bounded in Sobolev spaces, we obtain that all the solutions of the 
initial system \eqref{sys} are also bounded in Sobolev spaces. This prevents any kind of growth of Sobolev norms for our cubic coupled Schr\"odinger system
on the product space $\mathbb{R}\times\mathbb{T}$.

The modified wave operator theorem shows that for all solution of the resonant system \eqref{res}, there exists a solution of the initial system \eqref{sys}
which scatters to this resonant system solution. Thanks to this operator, we can transfer the beating effect from the resonant system to the initial system
and thus obtain solutions of the system \eqref{sys} which scatter to beating effect solutions of the resonant system.

This strategy is summarized in the following schema:

\begin{center}
      \begin{tikzpicture}[node distance=3cm and 4cm]
       
       \draw node[ultra thick, align=center, rectangle, draw=blue, scale=1.5, text width=1.8cm] (Sys) {Coupled Schr\"odinger system};

       \draw node[ultra thick, align=center, rectangle, draw=blue, scale=1.5, above=of Sys, text width=1.8cm] (Res) {Resonant system};
   
       \draw [-triangle 90] (Sys) to [bend right] node [align=center, right,text width=1.8cm] {Modified scattering} (Res);
      
       \draw [-triangle 90] (Res) to [bend right] node [align=center, left, text width=1.8cm] {Wave operator} (Sys);
      
       \draw node [ellipse, scale=1.2, draw=red, align=center, right=of Res, text width=1.8cm, xshift=-2cm] (BSres) {Bounded solutions} ;
 
       \draw node [ellipse, scale=1.2, draw=green, align=center, right=of Sys, text width=1.8cm, xshift=-2cm] (BSsys) {Bounded solutions};
      
       \draw node[ultra thick, align=center, rectangle, draw=blue, scale=1.5, left=of Res, text width=1.8cm] (Red) {Reduced resonant system};

       \draw [-triangle 90] (Red) --  node [align=center,above, text width=1.8cm] {Transfer of}
       node [align=center,below, text width=1.8cm] {solutions}(Res);
          
       \draw node [ellipse, scale=1.2, draw=red, align=center, below=of Red, text width=1.8cm,yshift=2.3cm] (Beatred) {Beating effect};
   
       \draw node [ellipse, scale=1.2, draw=green, align=center, left=of Sys, text width=1.8cm, xshift=3cm] (Beatsys) {Beating effect};
       
       \draw [dashed,-triangle 90] (Beatred) --  node [align=center,left, text width=3.5cm] {\newline Transfer of solution and Wave Operator} (Beatsys);
       
       \draw [dashed,-triangle 90] (BSres) --  node [align=center,right, text width=1.8cm] {Modified scattering} (BSsys);
       
       \draw [] (BSres) -- (Res);
       
       \draw [] (BSsys) -- (Sys);
       
       \draw [] (Beatred) -- (Red);
       
       \draw [] (Beatsys) -- (Sys);

      \end{tikzpicture}
\end{center}

\subsection{Plan of the paper}
In Section \ref{secprel}, we introduce the different norms and notations we need and state some preliminary estimates. 
In Section \ref{modanswav}, we present the modified scattering and wave operator results, which are extensions of the results of \cite{HPTV}.
Finally, the resonant system is introduced and studied in Section~\ref{Secres}. In particular, we obtain here the boundedness of the solutions and we
construct the solutions which provide some beating effect.

\section{Preliminaries}\label{secprel}
In this section, we first introduce in Subsection \ref{subnorm} all the notations and norms we use in the paper. Then, in Subsection \ref{subprofi},
we introduce the profiles and the nonlinearity associated to the profile system.

\subsection{Norms and notations}\label{subnorm}

\subsubsection{Some notations}
We mainly follow the notations of \cite{HPTV}.

Concerning standard notations, we use the notation $f\lesssim g$ to denote that there exists a constant $c>0$ such that $f\leq cg$. 
This notation allows us to avoid dealing with all the constants in the different inequalities. We also use the usual notation
$\left<p\right>:=\sqrt{1+p^2}$.

Most of the time, we use the distinction between lower case letters and capitalized letters to specify on which spatial domain the functions are defined.
On the one hand, we use lower case letters to denote functions of the Euclidean variable $f:\mathbb{R}\rightarrow\mathbb{C}$
or sequences $a:\mathbb{Z}\rightarrow\mathbb{C}$. On the other hand, we use capitalized letters to denote functions on the product space
$F:\mathbb{R}\times\mathbb{T}\rightarrow\mathbb{C}$.
% Exceptions are given by Littlewood-Paley operators and dyadic numbers which are capitalized.

We define the spatial Fourier transform in Schwartz space, for $\varphi\in S(\mathbb{R}\times\mathbb{T})$, by 
\begin{equation*}
\mathcal{F}(\varphi)(\xi,p)=\hat{\varphi}_p(\xi):=\frac{1}{(2\pi)^2}\displaystyle\int_{\mathbb{R}\times\mathbb{T}}e^{-ix\xi}e^{-iyp}\varphi(x,y)dxdy. 
\end{equation*}
We see with this definition that we use the notation $\hat{f}(\xi)$ for the Fourier transform of a function defined on~$\mathbb{R}$,
and the notation $a_p$ for the Fourier transform of a function defined on the torus $\mathbb{T}$.
An interesting feature of this Fourier transform convention is that there is no $\pi$-coefficient for the inverse Fourier transform.

As mentioned in the motivations, it seems relevant to introduce the profiles $(F,G)$ of a solution $(U,V)$, defined by
\begin{equation*}
F(t,x,y):=e^{-it\Delta_{\mathbb{R}\times\mathbb{T}}}U(t,x,y) \qquad\qquad G(t,x,y):=e^{-it\Delta_{\mathbb{R}\times\mathbb{T}}}V(t,x,y). 
\end{equation*}
As we work with small initial data, we can expect the nonlinearity to stay small, and thus the solutions of the system \eqref{sys} to stay close
to their profiles which are the solutions of the linear system.

Concerning the frequency sets we use here, we first introduce the momentum level set
\begin{align*}
\mathcal{M}:=\{(p,q,r,s)\in\mathbb{Z}^4\;:\;m(p,q,r,s):=p-q+r-s=0\}.
\end{align*}
Thanks to this set, we define the resonant level sets by
\begin{equation*}
 \Gamma_\omega:=\{(p,q,r,s)\in\mathcal{M}\;:\;\omega(p,q,r,s):=p^2-q^2+r^2-s^2=\omega\}.
\end{equation*}
In particular, the resonant level set associated to our resonant system is the set $\Gamma_0$.
Due to the dimension one for the torus, it is straightforward to check that 
\begin{equation*}
 (p,q,r,s)\in\Gamma_0 \quad \text{if and only if} \quad \left\{p,r\right\}=\left\{q,s\right\}.
\end{equation*}

Finally, for the constants used in the paper, we fix the small parameter $\delta\leq 10^{-3}$ and the integer
$N\geq12$ for the definitions of the norms we present just below.

\subsubsection{The norms}
First, for the sequences $a=(a_p)_{p\in\mathbb{Z}}$, we defined the associated Sobolev norm by
\begin{equation*}
 \|a\|^2_{h^s_p}:=\sum_{p\in\mathbb{Z}}(1+p^2)^s|a_p|^2.
\end{equation*}
Then, for the functions, 
we use two strong norms $S$ and $S^+$ defined by
\begin{equation}\label{defS}
\|F\|_{S}:=\|F\|_{H^N_{x,y}}+\|xF\|_{L^2_{x,y}}, \quad \|F\|_{S^+}:=\|F\|_{S}+\|(1-\partial_{xx})^4F\|_{S}+\|xF\|_{S}.
\end{equation}
The subscripted letters $x,y$ and $p$ in the definitions of the norms indicate the variable concerned by the integration, and thus the canonical
integration domain associated ($\mathbb{R}$ for the variables $x$, $y$ and $\xi$; and $\mathbb{Z}$ for the variable $p$).
We remark that the $S^+$ norm is a stronger norm than the $S$ norm, but only in $x$.

\subsection{Introduction of the profiles}\label{subprofi}

By writing the system \eqref{sys} with the profiles (defined in \eqref{defprof}), we get
\begin{equation*}\begin{dcases}
i\partial_tF(t)&=e^{-it\Delta_{\mathbb{R}\times\mathbb{T}}}\left(e^{it\Delta_{\mathbb{R}\times\mathbb{T}}}G(t)e^{-it\Delta_{\mathbb{R}\times\mathbb{T}}}
\overline{G(t)}e^{it\Delta_{\mathbb{R}\times\mathbb{T}}}F(t)\right)=\mathcal{N}^t[G(t),G(t),F(t)],\\ 
i\partial_tG(t)&=e^{-it\Delta_{\mathbb{R}\times\mathbb{T}}}\left(e^{it\Delta_{\mathbb{R}\times\mathbb{T}}}F(t)e^{-it\Delta_{\mathbb{R}\times\mathbb{T}}}
\overline{F(t)}e^{it\Delta_{\mathbb{R}\times\mathbb{T}}}G(t)\right)=\mathcal{N}^t[F(t),F(t),G(t)],
\end{dcases}\end{equation*}
where 
\begin{equation*}
\mathcal{N}^t[F(t),G(t),H(t)]:=e^{-it\Delta_{\mathbb{R}\times\mathbb{T}}}\left(e^{it\Delta_{\mathbb{R}\times\mathbb{T}}}
Fe^{-it\Delta_{\mathbb{R}\times\mathbb{T}}}\overline{G}e^{it\Delta_{\mathbb{R}\times\mathbb{T}}}H\right).\end{equation*}
Let us compute the Fourier transform of $\mathcal{N}^t[F(t),G(t),H(t)]$. We first remark that, taking the Fourier transform with respect to the $y$ variable, 
we get for F
\begin{equation*}
e^{it\Delta_{\mathbb{R}\times\mathbb{T}}}F(t,x,y)=\displaystyle\sum_{q\in\mathbb{Z}}e^{iqy}e^{-itq^2}\left(e^{it\partial_{xx}}F_q(t,x)\right).
\end{equation*}
Thus, by taking this expression for $F$,$G$ and $H$, we obtain
\begin{align*}
\mathcal{F}\mathcal{N}^t[F,G,H](\xi,p)
&=\dfrac{e^{it\xi^2}}{(2\pi)^2}\int_{\mathbb{R}\times\mathbb{T}}e^{-ix\xi}
\sum_{q,r,s\in\mathbb{Z}}e^{-im(p,q,r,s)y}e^{it\omega(p,q,r,s)}\left(e^{it\partial_{xx}}F_qe^{-it\partial_{xx}}\overline{G_r}
e^{it\partial_{xx}}H_s\right)(x)dxdy\nonumber\\
&=\dfrac{e^{it\xi^2}}{2\pi}\sum_{(p,q,r,s)\in\mathcal{M}}e^{it\omega(p,q,r,s)}
\int_{\mathbb{R}}e^{-ix\xi}\left(e^{it\partial_{xx}}F_qe^{-it\partial_{xx}}\overline{G_r}
e^{it\partial_{xx}}H_s\right)(x)dx\nonumber\\
&=\sum_{(p,q,r,s)\in\mathcal{M}}e^{it\omega(p,q,r,s)}\widehat{\mathcal{I}^t[F_q,G_r,H_s]}(\xi),\label{FNI}
\end{align*}
where
\begin{equation*}
\mathcal{I}^t[F_q,G_r,H_s](x):=e^{-it\partial_{xx}}\left(e^{it\partial_{xx}}F_qe^{-it\partial_{xx}}\overline{G_r}
e^{it\partial_{xx}}H_s\right)(x).
\label{defI}\end{equation*}
We compute $\widehat{\mathcal{I}^t[F_q,G_r,H_s]}$ using the properties of the Fourier transform, we have
\begin{align*}
\widehat{\mathcal{I}^t[F_q,G_r,H_s]}(\xi)&=e^{it\xi^2}\widehat{\left(e^{it\partial_{xx}}F_qe^{-it\partial_{xx}}\overline{G_r}
e^{it\partial_{xx}}H_s\right)}(\xi)\\
&=e^{it\xi^2}\int_{\mathbb{R}^2}\widehat{(e^{it\partial_{xx}}F_q)}(\xi-a-b)\widehat{(e^{-it\partial_{xx}}\overline{G_r})}(a)
\widehat{(e^{it\partial_{xx}}H_s)}(b)dadb\\
&=e^{it\xi^2}\int_{\mathbb{R}^2}e^{it\left(b^2-a^2-(\xi-a-b)^2\right)}\hat{F}_q(\xi-a-b)\widehat{\overline{G_r}}(a)\hat{H}_s(b)dadb,
\intertext{we now use the changes of variable $\eta=a+b$ and $\kappa=\xi-a$ to get}
\widehat{\mathcal{I}^t[F_q,G_r,H_s]}(\xi)
&=\int_{\mathbb{R}^2}e^{2it\eta\kappa}\hat{F}_q(\xi-\eta)\overline{\hat{G}_r}(\xi-\eta-\kappa)\hat{H}_s(\xi-\kappa)d\eta d\kappa.
\end{align*}
Thus, back to $\mathcal{N}^t$, we have
\begin{equation*}
\mathcal{F}\mathcal{N}^t[F,G,H](\xi,p)=\sum_{(p,q,r,s)\in\mathcal{M}}e^{it\omega(p,q,r,s)}
\int_{\mathbb{R}^2}e^{2it\eta\kappa}\hat{F}_q(\xi-\eta)\overline{\hat{G}_r}(\xi-\eta-\kappa)\hat{H}_s(\xi-\kappa)d\eta d\kappa.
\end{equation*}
From this equation, by a formal  intuition given by the stationary phase principle and the Birkhoff normal forms, 
we define the resonant part $\mathcal{R}$ of the nonlinearity~$\mathcal{N}^t$ by
\begin{equation}\label{defR}
\mathcal{F}\mathcal{R}[F,G,H](\xi,p)=\sum_{(p,q,r,s)\in\Gamma_0}\hat{F}_q(\xi)\overline{\hat{G}_r}(\xi)\hat{H}_s(\xi).
\end{equation}

\section{The modified scattering and the wave operator}\label{modanswav}

The aim of this section is to prove a modified scattering result and to construct of modified wave operator between the initial system \eqref{sys} 
and the resonant system \eqref{res}.
For this purpose, we first study a decomposition of the nonlinearity $\mathcal{N}^t$ in order to highlight the proximity between $\mathcal{N}^t$ and 
$\frac{\pi}t\mathcal{R}$.
Then, we state here two theorems which are extensions of the modified scattering and wave operator results of Hani, Pausader, Tzvetkov and Visciglia in~\cite{HPTV} 
to our coupled system case.
We refer to their paper for the complete proofs of these theorems, because the pass from the equation to the system doesn't bring new technical difficulties.
The complete proofs in the system case are also available in \cite{VVDRthese}.
In the two theorems, $S^+$ is a strong $L^2_{x,y}$ based Banach space, introduced in Subsection \ref{subnorm}, which contains the Schwartz functions. 
Moreover, we recall that $N\geq12$ is an integer.

\subsection{Decomposition of the nonlinearity}

We recall that the nonlinearity $\mathcal{N}^t$ is given in the Fourier space by
\begin{equation*}
\mathcal{F}\mathcal{N}^t[F,G,H](\xi,p)=\sum_{p-q+r-s=0}e^{it(p^2-q^2+r^2-s^2)}
\int_{\mathbb{R}^2}e^{2it\eta\kappa}\hat{F}_q(\xi-\eta)\overline{\hat{G}_r}(\xi-\eta-\kappa)\hat{H}_s(\xi-\kappa)d\eta d\kappa.
\end{equation*}
For the integral part of the nonlinearity, we use the following stationary phase principle from Zuily in \cite{Zui02}: 
\begin{lem}
Let $n\in\mathbb{N}^{*}$ be an integer and $a\in\mathcal{C}^\infty_0(\mathbb{R}^n,\mathbb{C})$ a smooth function with compact support.
If the phase $\varphi\in\mathcal{C}^\infty(\mathbb{R}^n,\mathbb{R})$ admits an unique critical point $x_c\in\mathbb{R}^n$, non-degenerate, then 
\begin{equation*}
\int_{\mathbb{R}^n}e^{it\varphi(x)}a(x)\mathrm{d}x
\underset{t\rightarrow+\infty}\sim
e^{it\varphi(x_c)}\left(\frac{2\pi}t\right)^{\frac n2}\left|\det \varphi''(x_c)\right|^{-\frac12}
e^{i\frac\pi4\sgn(\varphi''(x_c))}a(x_c),
\end{equation*}
where $\sgn(\varphi''(x_c))$ is the signature of the Hessian of $\varphi$ in $x_c$.
\end{lem}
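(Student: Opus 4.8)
The plan is the classical three-step stationary phase argument: localization near $x_c$, normalization of the phase by the Morse lemma, and the explicit Gaussian (Fresnel) computation. First I would localize. Choose $\chi\in\mathcal C^\infty_0(\mathbb R^n)$ equal to $1$ near $x_c$ and supported in a small ball $B$ on which the Morse lemma will apply. Since $x_c$ is the \emph{only} critical point of $\varphi$ and $a$ has compact support, $\nabla\varphi$ does not vanish on $\supp\bigl((1-\chi)a\bigr)$, so repeated integration by parts against the transpose of the operator $Lf=\tfrac{1}{it}\,\tfrac{\nabla\varphi\cdot\nabla f}{|\nabla\varphi|^2}$ shows $\int_{\mathbb R^n}e^{it\varphi}(1-\chi)a\,\mathrm dx=O(t^{-M})$ for every $M$. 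Hence it suffices to analyze $\int_{\mathbb R^n}e^{it\varphi}(\chi a)\,\mathrm dx$, i.e. we may assume $a$ is supported in $B$.

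Next I would normalize the phase. By the Morse lemma there is a diffeomorphism $x=\Psi(z)$ from a neighborhood of $0$ onto $B$ with $\Psi(0)=x_c$ and $\varphi(\Psi(z))=\varphi(x_c)+\tfrac12\langle Az,z\rangle$, where $A=\operatorname{diag}(\varepsilon_1,\dots,\varepsilon_n)$ with $\varepsilon_j\in\{\pm1\}$. Differentiating this identity twice at $z=0$ and using $\nabla\varphi(x_c)=0$ yields $A=M^{T}\varphi''(x_c)\,M$ with $M=D\Psi(0)$; Sylvester's law of inertia then gives $\sum_j\varepsilon_j=\sgn\varphi''(x_c)$, and taking determinants gives $|\det M|=|\det\varphi''(x_c)|^{-1/2}$. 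Changing variables (and reinserting a smooth cutoff, harmless by the first step),
\[
\int_{\mathbb R^n}e^{it\varphi(x)}a(x)\,\mathrm dx=e^{it\varphi(x_c)}\int_{\mathbb R^n}e^{\frac{it}{2}\langle Az,z\rangle}\,b(z)\,\mathrm dz,\qquad b(z):=a(\Psi(z))\,|\det D\Psi(z)|,
\]
so $b\in\mathcal C^\infty_0$ and $b(0)=a(x_c)\,|\det\varphi''(x_c)|^{-1/2}$.

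Finally, the model computation. Write $b(z)=b(0)+\sum_{j=1}^{n}z_j r_j(z)$ with $r_j\in\mathcal C^\infty_0$. For the constant term the integral factorizes over coordinates, and one uses the Fresnel identity $\int_{\mathbb R}e^{\frac{it}{2}\varepsilon u^2}\,\mathrm du=\bigl(\tfrac{2\pi}{t}\bigr)^{1/2}e^{i\frac{\pi}{4}\varepsilon}$ for $\varepsilon=\pm1$ (obtained by rotating the contour, or by regularizing with a factor $e^{-\delta u^2}$ and letting $\delta\to0^+$; the missing Gaussian tails are $O(t^{-\infty})$ by non-stationary phase), which produces $b(0)\bigl(\tfrac{2\pi}{t}\bigr)^{n/2}e^{i\frac{\pi}{4}\sum_j\varepsilon_j}=b(0)\bigl(\tfrac{2\pi}{t}\bigr)^{n/2}e^{i\frac{\pi}{4}\sgn\varphi''(x_c)}$. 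For each remainder term, one integration by parts in $z_j$ via $z_je^{\frac{it}{2}\varepsilon_jz_j^2}=(it\varepsilon_j)^{-1}\partial_{z_j}e^{\frac{it}{2}\varepsilon_jz_j^2}$ gains a factor $t^{-1}$, so that term is $O(t^{-n/2-1})$ and is negligible. Substituting $b(0)=a(x_c)|\det\varphi''(x_c)|^{-1/2}$ and restoring the prefactor $e^{it\varphi(x_c)}$ gives exactly the claimed asymptotics (with error $O(t^{-n/2-1})$, so the statement is to be read as the leading term of the expansion).

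I expect the only genuine obstacle to be the clean bookkeeping of the Morse-lemma change of variables, namely tracking the Jacobian $|\det\varphi''(x_c)|^{-1/2}$ and the signature $\sgn\varphi''(x_c)$ simultaneously, together with the minor technical point of justifying the Fresnel integrals by regularization since $u\mapsto e^{\frac{it}{2}\varepsilon u^2}$ is not integrable. The localization and the remainder estimates are routine. (For a fully detailed treatment one may invoke Zuily \cite{Zui02} directly.)
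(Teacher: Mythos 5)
The paper does not actually prove this lemma: it is quoted verbatim as a known stationary phase result from Zuily \cite{Zui02}, and the text immediately notes that in the application the hypotheses are not even met (the amplitude is not compactly supported), so the lemma is used only as a heuristic to guess the resonant part of the nonlinearity. There is therefore no internal proof to compare against; what you have written is the canonical three-step stationary phase argument (non-stationary localization, Morse lemma, Fresnel model computation), which is precisely the kind of proof the cited reference supplies.

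As a proof of the stated lemma your sketch is essentially right, and the bookkeeping of the Jacobian factor $|\det D\Psi(0)|=|\det\varphi''(x_c)|^{-1/2}$ and of the signature via Sylvester's law of inertia is correct. One small slip: after the Morse change of variables you cannot write $b(z)=b(0)+\sum_j z_j r_j(z)$ with all $r_j\in\mathcal C^\infty_0$, because the left-hand side is compactly supported while the constant $b(0)$ is not; the $r_j$ produced by Hadamard's lemma only decay like $|z|^{-1}$. The standard fix is to insert a cutoff $\chi\in\mathcal C^\infty_0$ with $\chi\equiv1$ near $0$ and split $b=b(0)\chi+\bigl(b-b(0)\chi\bigr)$: the first piece is completed to the full Fresnel integral at cost $O(t^{-\infty})$ by non-stationary phase, and the second piece vanishes at the origin \emph{and} has compact support, so Hadamard's lemma gives genuinely compactly supported $r_j$ and your integration by parts in $z_j$ is then licit. (Alternatively one can pass to the Fourier side and pair $\hat b$ against the explicit Fourier transform of the Gaussian phase, which sidesteps the decomposition altogether.) With that adjustment the argument is complete and matches the textbook proof.
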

If our phase $\varphi(\eta,\kappa):=2\eta\kappa$ satisfies the assumptions of the lemma (the origin is the only critical point and is non-degenerate), 
this is not the case for the amplitude $a$.
Nevertheless, the stationary phase principle gives us a candidate to be the predominant term of the integral part, we expect that
\begin{equation*}
\int_{\mathbb{R}^2}e^{2it\eta\kappa}\hat{F}_q(\xi-\eta)\overline{\hat{G}_r}(\xi-\eta-\kappa)\hat{H}_s(\xi-\kappa)d\eta d\kappa
\underset{t\rightarrow+\infty}\sim\frac{\pi}t\hat{F}_q(\xi)\overline{\hat{G}_r(\xi)}\hat{H}_s(\xi).
\end{equation*}
To take the sum into account, we use a Birkhoff normal form intuition (see \cite{GPT}, \cite{GreTho12}, \cite{GreVil11}, \cite{HauPro16}\ldots). 
Therefore, we expect that 
\begin{equation*}
\sum_{p-q+r-s=0}a(t,p,q,r,s)\underset{\text{up to a remainder term}}\sim\sum\limits_{\substack{p-q+r-s=0 \\ p^2-q^2+r^2-s^2=0}}a(t,p,q,r,s).
\end{equation*}
Thanks to the stationary phase and the Birkhoff normal form intuitions, we finally expect that
\begin{equation*}
\mathcal{N}^t[F,G,H]\underset{t\rightarrow+\infty}\sim\frac\pi t\mathcal{R}[F,G,H], 
\end{equation*}
where $\mathcal{R}$ is defined in \eqref{defR}.
Thus, we study the decomposition 
\begin{equation}\label{decom}
 \mathcal{N}^t[F,G,H]=\frac\pi t\mathcal{R}[F,G,H]+\mathcal{E}^t[F,G,H].
\end{equation}
In order to show that the dynamics of the initial system (associated to $\mathcal{N}^t$) are close to the dynamics of the resonant system (associated to 
$\mathcal{R}$), we now have to show that $\mathcal{E}^t$ is a good remainder term (i.e a term that enjoys a good decay).

\subsection{Study of the remainder term \texorpdfstring{$\mathcal{E}^t$}{Et}}
The study of the decay of the remainder term $\mathcal{E}^t$ is the purpose of Proposition 3.1 in \cite{HPTV}.
The idea is to use a frequency decomposition and resonant/nonresonant decomposition of the nonlinearity $\mathcal{N}^t$.

First, thanks to the Littlewood-Paley decomposition, we split the nonlinearity into two parts:
\begin{equation*}
 \mathcal{N}^t=\mathcal{N}^t_{\text{low}}+\mathcal{N}^t_{\text{high}},
\end{equation*}
where the notation $\mathcal{N}^t_{\text{low}}$ (respectively $\mathcal{N}^t_{\text{high}}$) stands for the low (respectively high) frequency part of the 
nonlinearity. The idea here is to show that the term $\mathcal{N}^t_{\text{high}}$ has a good decay, thus it is a part of the remainder term $\mathcal{E}^t$.
For that purpose, we use a classical bilinear Strichartz estimate (see Lemma 7.2 in~\cite{HPTV} for the statement, two different proofs are given by 
Colliander, Keel, Staffilani, Takaoka and Tao in \cite{CKSTT01} and Tao in \cite{taophysical}). In particular, in order to control the high frequency part of 
the nonlinearity, we need the $H^N$ part of the $S$ norm (defined in \eqref{defS}) and the fact that we choose $N$ large enough (in \cite{HPTV}, they choose 
$N\geq30$ to be sure that $N$ is large enough, but we see here that $N\geq12$ is sufficient). 
This is the purpose of Lemma 3.2 in \cite{HPTV}.

Then, we have to study the low frequency term $\mathcal{N}^t_{\text{low}}$. Here, we make a decomposition between resonant (with $p^2-q^2+r^2-s^2=0$) 
and nonresonant (with $p^2-q^2+r^2-s^2\neq0$) parts. We obtain 
\begin{equation*}
 \mathcal{N}^t_{\text{low}}=\mathcal{N}^t_{\text{low,res}}+\tilde{\mathcal{N}}^t_{\text{low}}.
\end{equation*}
Thanks to the term $\|xF\|_{L^2_{x,y}}$ term of the $S$ norm in \eqref{defS}, we prove that the nonresonant part $\tilde{\mathcal{N}}^t_{\text{low}}$ has 
also a good decay and can be integrate to the remainder part $\mathcal{E}^t$ (Lemma 3.3 in \cite{HPTV}). 

Finally, we have to deal with the low frequency resonant part $\mathcal{N}^t_{\text{low,res}}$. For that purpose, we use the proximity between the resonant part 
$\mathcal{N}^t_{\text{res}}$ of $\mathcal{N}^t$ and the resonant nonlinearity $\frac{\pi}t\mathcal{R}$. This proximity is given by Lemma 3.7 in \cite{HPTV}.
For our system case, the proof is simpler due to the dimension one for the torus (see Proposition 4.3.9 in \cite{VVDRthese}). Therefore, we study the 
decomposition
\begin{equation*}
 \mathcal{N}^t_{\text{low,res}}=\frac\pi t\mathcal{R}+\left(\mathcal{N}^t_{\text{low,res}}-\frac\pi t\mathcal{R}_{\text{low}}\right)
 -\frac\pi t\mathcal{R}_{\text{high}}.
\end{equation*}
In particular, the last term $\frac\pi t\mathcal{R}_{\text{high}}$ justifies the role of the $\|(1-\partial_{xx})^4F\|_{S}$ term 
in the definition of the $S^+$ norm in \eqref{defS} (see Proposition 4.3.15 in \cite{VVDRthese}). The remainder term in \eqref{decom} is thus defined by
\begin{equation*}
 \mathcal{E}^t=\mathcal{N}^t_{\text{high}}+\tilde{\mathcal{N}}^t_{\text{low}}
 +\left(\mathcal{N}^t_{\text{low,res}}-\frac\pi t\mathcal{R}_{\text{low}}\right)-\frac\pi t\mathcal{R}_{\text{high}}.
\end{equation*}

\subsection{The modified scattering}
First, the following theorem shows that solutions of the initial system~\eqref{sys} scatter to solutions of the resonant system \eqref{res}.

\begin{thm}\label{modscat}
There exists $\varepsilon>0$ such that if $U_0,V_0\in S^+$ satisfy
\begin{equation*}
\|U_0\|_{S^+}+\|V_0\|_{S^+}\leq\varepsilon,
\end{equation*}
and if $(U(t),V(t))$ solves \eqref{sys} with initial data $(U_0,V_0)$, then $(U,V)\in\mathcal{C}([0,+\infty):H^N)\times\mathcal{C}([0,+\infty) : H^N)$ exists 
globally and exhibits modified scattering to its resonant dynamics \eqref{res} in the following sense: there exists
$(W_{U,0},W_{V,0})$ satisfying
\begin{equation}\label{wu0control}
\|W_{U,0}\|_{S}+\|W_{V,0}\|_{S}\lesssim\varepsilon,
\end{equation}
such that if $(W_{U}(t),W_{V}(t))$ is the solution of \eqref{res} with initial data $(W_{U,0},W_{V,0})$, then
\begin{equation*}\begin{dcases}                               
\|U(t)-e^{it\Delta_{\mathbb{R}\times\mathbb{T}}}W_U(\pi\ln(t))\|_{H^N(\mathbb{R}\times\mathbb{T})}&\rightarrow0 \quad \text{as} \quad t\rightarrow+\infty,\\
\|V(t)-e^{it\Delta_{\mathbb{R}\times\mathbb{T}}}W_V(\pi\ln(t))\|_{H^N(\mathbb{R}\times\mathbb{T})}&\rightarrow0 \quad \text{as} \quad t\rightarrow+\infty.
\end{dcases}
\end{equation*}
Furthermore, we have the following decay estimate
\begin{equation*}\begin{dcases}                               
\|U(t)\|_{L^\infty_xH^1_y}&\lesssim(1+|t|)^{-\frac12},\\
\|V(t)\|_{L^\infty_xH^1_y}&\lesssim(1+|t|)^{-\frac12}.
\end{dcases}
\end{equation*}
\end{thm}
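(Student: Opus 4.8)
The proof follows essentially verbatim the scheme of Hani, Pausader, Tzvetkov and Visciglia \cite{HPTV} (see also \cite{VVDRthese} for the coupled case), so I will only describe its architecture. Working with the profiles $(F,G)$ from \eqref{defprof} and inserting the decomposition \eqref{decom} together with the decay bounds on $\mathcal{E}^t$ from the previous subsections, the profile system reads $i\partial_t F=\tfrac{\pi}{t}\mathcal{R}[G,G,F]+\mathcal{E}^t[G,G,F]$ (and symmetrically for $G$), where $\|\mathcal{E}^t[\cdot]\|_{S}$ decays like $t^{-1-\delta}$, controlled by cubic expressions in the $S^+$ norms of the entries. Introducing the slow time $\tau=\pi\ln t$ and setting $\widetilde{F}(\tau):=F(e^{\tau/\pi})$, $\widetilde{G}(\tau):=G(e^{\tau/\pi})$, the factor $\pi/t$ is absorbed and one gets $i\partial_\tau\widetilde{F}=\mathcal{R}[\widetilde{G},\widetilde{G},\widetilde{F}]+\mathcal{E}_1(\tau)$ with $\|\mathcal{E}_1(\tau)\|_{S}\lesssim e^{-\delta\tau/\pi}$; thus the leading dynamics is exactly the resonant system \eqref{res}, and the remainder is integrable in $\tau$.

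First I would establish local well-posedness and the a priori bounds. Local existence of \eqref{sys} for data with profiles in $S^+$ is a routine contraction argument, $\mathcal{N}^t$ being bounded trilinear on $S^+$. For the global bound I would run a continuity argument on $[1,T]$: assuming $\|F(t)\|_{S}+\|G(t)\|_{S}\le C_1\varepsilon$ and $\|F(t)\|_{S^+}+\|G(t)\|_{S^+}\le C_2\varepsilon\,t^{\delta}$, I improve both constants for $\varepsilon$ small. The $S^+$ bound follows from Duhamel and the decay of $\mathcal{E}^t$ (the $\tfrac{\pi}{t}\mathcal{R}$ term is bounded on $S^+$ and contributes at most a logarithmic, hence $t^{\delta}$, growth). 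For the $S$ bound one passes to the $\tau$ variable: the contribution of $\mathcal{E}_1$ is $\lesssim\int_{\tau_0}^{\tau}e^{-\delta\tau'/\pi}\,d\tau'\lesssim 1$, while the resonant part is handled through the conservation laws of \eqref{res}. This is the only place where the coupled structure needs an explicit check: since on $\mathbb{T}$ one has $(p,q,r,s)\in\Gamma_0$ iff $\{p,r\}=\{q,s\}$, the resonant system decouples over $(\xi,p)$ and a direct computation gives $\tfrac{d}{d\tau}\bigl(|\mathcal{F}(W_U)(\xi,p)|^2+|\mathcal{F}(W_V)(\xi,p)|^2\bigr)=0$; weighting by $\langle p\rangle^{2\sigma}$ and integrating in $\xi$ shows that the resonant flow preserves $\|W_U\|_{H^\sigma_{x,y}}^2+\|W_V\|_{H^\sigma_{x,y}}^2$ for all $\sigma$, and the weighted part of the $S$ norm is controlled exactly as in \cite{HPTV}. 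Closing the bootstrap yields global existence in $\mathcal{C}([0,+\infty):H^N)^2$, the bound $\|F(t)\|_{S}+\|G(t)\|_{S}\lesssim\varepsilon$, and — writing $U_p(t,x)=e^{-itp^2}e^{it\partial_{xx}}F_p(t,x)$, applying the one-dimensional dispersive estimate $\|e^{it\partial_{xx}}g\|_{L^\infty_x}\lesssim(1+|t|)^{-1/2}\|g\|_{H^1_x}$ modewise in $y$, and summing with weight $\langle p\rangle^2$ — the decay estimate $\|U(t)\|_{L^\infty_xH^1_y}+\|V(t)\|_{L^\infty_xH^1_y}\lesssim(1+|t|)^{-\frac12}$.

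Next I would prove the scattering. Let $\Phi^{\mathrm{res}}_\tau$ denote the flow of \eqref{res} acting on the pair. Because $\|\mathcal{E}_1(\tau)\|_{S}$ is integrable and the resonant flow and its linearization grow at most like $e^{C\varepsilon^2\tau}$, so that $e^{C\varepsilon^2\tau}\|\mathcal{E}_1(\tau)\|_{S}$ is integrable for $\varepsilon$ small enough, the family $\tau\mapsto\Phi^{\mathrm{res}}_{-\tau}\bigl(\widetilde{F}(\tau),\widetilde{G}(\tau)\bigr)$ is Cauchy in $S$ as $\tau\to+\infty$; I call its limit $(W_{U,0},W_{V,0})$, which obeys $\|W_{U,0}\|_{S}+\|W_{V,0}\|_{S}\lesssim\varepsilon$ since $\Phi^{\mathrm{res}}$ preserves the $H^\sigma$ components (and the weighted part is controlled as above). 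Letting $(W_U,W_V)$ solve \eqref{res} with this data, a Gronwall estimate in $\tau$ gives $\|\widetilde{F}(\tau)-W_U(\tau)\|_{H^N}+\|\widetilde{G}(\tau)-W_V(\tau)\|_{H^N}\to0$, i.e. $\|F(t)-W_U(\pi\ln t)\|_{H^N}\to0$ as $t\to+\infty$, and applying the $H^N$-isometry $e^{it\Delta_{\mathbb{R}\times\mathbb{T}}}$ gives the stated convergence for $(U,V)$.

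I expect essentially no obstacle from the coupled structure itself: beyond the conservation law above, passing from the single equation to the system only amounts to splitting each trilinear estimate into its $U$ and $V$ contributions. The genuine difficulty lies entirely in the nonlinear analysis of the previous subsections — controlling $\mathcal{E}^t$, which requires the frequency-localized bilinear Strichartz estimate for $\mathcal{N}^t_{\mathrm{high}}$, the weighted-norm gain for the nonresonant low-frequency part, and the proximity of $\mathcal{N}^t_{\mathrm{low,res}}$ to $\tfrac{\pi}{t}\mathcal{R}$ (including the role played by the $\|(1-\partial_{xx})^4F\|_{S}$ term of $S^+$) — and these are precisely the estimates imported from \cite{HPTV} and carried out in the system setting in \cite{VVDRthese}.
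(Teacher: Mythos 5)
Your architecture is essentially the paper's own: pass to the profiles and the slow time $\tau=\pi\ln t$, use the decomposition $\mathcal{N}^t=\tfrac{\pi}{t}\mathcal{R}+\mathcal{E}^t$ together with the decay of $\mathcal{E}^t$, show that the backward-evolved profiles form a Cauchy family in $S$, and conclude via Gr\"onwall; the paper discretizes this (solving \eqref{res} from data $F(e^{n/\pi})$ at time $n$ and showing $\tilde W^n_U(0)$ is Cauchy), while you phrase it with the flow $\Phi^{\mathrm{res}}_{-\tau}$, but at $\tau=n$ these coincide. The bootstrap scheme, the use of the $\Gamma_0$-conservation law to control the resonant part, and the reliance on the previously established $\mathcal{E}^t$ estimates are all as in \cite{HPTV}/\cite{VVDRthese}.

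One small correction concerning the decay estimate: the bound $\|e^{it\partial_{xx}}g\|_{L^\infty_x}\lesssim(1+|t|)^{-1/2}\|g\|_{H^1_x}$ you invoke is false, since the right-hand side is invariant under modulations $g\mapsto e^{ix^2/(4T)}g$ while the left-hand side is not, so no $t^{-1/2}$ decay can hold over all of $H^1_x$. The correct ingredient is the $L^1_x\to L^\infty_x$ dispersive bound, which forces a weighted control such as $\|g\|_{L^2_x}+\|xg\|_{L^2_x}$ on the right (or the sharper frequency-localized variant in \cite[Lemma 7.3]{HPTV}). Since this weighted quantity is exactly part of the $S$ norm of the profiles $F,G$, the conclusion $\|U(t)\|_{L^\infty_xH^1_y}\lesssim(1+|t|)^{-1/2}$ still follows, but the estimate as you stated it needs the weighted norm, not just $H^1_x$.
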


\begin{proof}[Idea of the proof]
First, by classical tools, we prove that the solution of the initial system \eqref{sys} exists globally. 
Using the direction of dispersion, the decay estimate is also standard. 
For the scattering result, we take a solution $(U,V)$ of \eqref{sys} which satisfies, for $\varepsilon$ small enough,
\begin{equation*}
\|U_0\|_{S^+}+\|V_0\|_{S^+}\leq\varepsilon.
\end{equation*}
Then, we define
\begin{equation*}
 W_U^n(t):=\tilde{W}^n_U(\pi\ln(t)) \quad \text{and} \quad W_V^n(t):=\tilde{W}_V^n(\pi\ln(t)),
\end{equation*}
where $(\tilde{W}^n_U,\tilde{W}_V^n)$ is the solution of the resonant system \eqref{res} with Cauchy data
\begin{equation*}
\tilde{W}^n_U(n)=W_U^n(e^{\frac n\pi})=F(e^{\frac n\pi}), \qquad \tilde{W}^n_V(n)=W_V^n(e^{\frac n\pi})=G(e^{\frac n\pi}), 
\end{equation*}
with the profiles $F$ and $G$ of $U$ and $V$ defined in \eqref{defprof}.
In order to prove the scattering result, we want to show that the couple of profiles $(F,G)$ tends to be close to a couple of solutions of the resonant 
system~\eqref{res}.
By definition of $W_U^n$ and $W_V^n$, we have 
\begin{equation*}
 i\partial_t W^n_U(t)=\frac{\pi}t\mathcal{R}[W_V^n(t),W_V^n(t),W_U^n(t)], \quad i\partial_t W_V^n(t)=\frac{\pi}t\mathcal{R}[W_U^n(t),W_U^n(t),W_V^n(t)].
\end{equation*} 
Thus, using the decomposition \eqref{decom} and the choice of initial data, we have
\begin{align*}
F(t)-W_U^n(t)&=-i\int_{e^{\frac n\pi}}^t (\mathcal{N}^s[G,G,F]-\frac{\pi}s\mathcal{R}[W_V^n(s),W_V^n(s),W_U^n(s)])\mathrm{d}s\\
\qquad&=-i\int_{e^{\frac n\pi}}^t \mathcal{E}^s[G,G,F]ds-i\int_{e^{\frac n\pi}}^t \frac{\pi}s
\big(\mathcal{R}[G(s),G(s),F(s)]-\mathcal{R}[W_V^n(s),W_V^n(s),W_U^n(s)]\big)\mathrm{d}s.
\end{align*}
Thanks to the good decay of the remainder term $\mathcal{E}^t$ and the control of the resonant $\mathcal{R}$, and using the Gr\"onwall's Lemma, we show that
\begin{equation*}
 \sup_{e^{\frac n\pi}\leq t\leq e^{\frac {(n+4)}\pi}}\|F(t)-W_U^n(t)\|_S\underset{n\to +\infty}{\longrightarrow} 0.
\end{equation*}
We see here that $F(t)$ is close to $\tilde{W}_U^n(\pi\ln(t))$. 
However, we can't conclude that one $(\tilde{W}_U^n,\tilde{W}_V^n)$ couple is solution of the problem, because if we want to take $t\rightarrow+\infty$, 
the time $t$ becomes bigger than any $e^{\frac {(n+4)}\pi}$ for a fixed $n$. The idea is thus to find some limit of the $\tilde{W}_U^n$ terms. 
Because of the different intervals $[e^{\frac n\pi},e^{\frac {(n+4)}\pi}]$ for the estimates, we can't hope to take directly the limit of the $\tilde{W}_U^n$. 
This is the reason why we look now at the initial data. 
The idea is to show that the sequences $(\tilde{W}_U^{n}(0))_n$ and $(\tilde{W}_V^{n}(0))_n$ are Cauchy sequences in the complete space $S$ 
(defined by the S norm). 
Thus, there exists a couple of elements $(W_{U,0,\infty},W_{V,0,\infty})\in S\times S$, limit of these sequences. 
We denote now by $(\tilde{W}_{U,\infty},\tilde{W}_{V,\infty})$ the couple of solutions of the resonant system \eqref{res} with the couple of initial data 
$(W_{U,0,\infty},W_{V,0,\infty})$. 
Finally, the couple $(\tilde{W}_{U,\infty},\tilde{W}_{V,\infty})$ is the solution of the problem in the sense where we can show that (the same relation is 
satisfied \textit{mutatis mutandis} for V):
\begin{align*}
\|e^{-\Delta_{\mathbb{R}\times\mathbb{T}}}U(t)-\tilde{W}_{U,\infty}(\pi\ln(t))\|_S=\|F(t)-\tilde{W}_{U,\infty}(\pi\ln(t))\|_S
\underset{t\to +\infty}{\longrightarrow}0.
\end{align*}
In particular, the relation is true for the $H^N$ norm which is a part of the $S$ norm (see definition \eqref{defS}), and the proof of the scattering result 
is completed.
\end{proof}

\subsection{The modified wave operator}

After the existence of a scattering result, the natural question is to look for a wave operator. Indeed, if we know that each couple of solutions of the system
\eqref{sys} scatters to a couple of solutions of the resonant system \eqref{res}, we want to know if all the couples of solutions of the resonant system \eqref{res} 
are limits of couples of solutions of the initial system. The answer is given by the following theorem:

\begin{thm}\label{waveop}
There exists $\varepsilon>0$ such that if $W_{U,0},W_{V,0}\in S^+$ satisfy
\begin{equation*}
\|W_{U,0}\|_{S^+}+\|W_{V,0}\|_{S^+}\leq\varepsilon, 
\end{equation*}
and if $(W_{U}(t),W_{V}(t))$ solves the resonant system \eqref{res} with initial data $(W_{U,0},W_{V,0})$, then there exists a couple
$(U,V)\in\mathcal{C}([0,+\infty):H^N)\times\mathcal{C}([0,+\infty) : H^N)$ solution of \eqref{sys} such that
\begin{equation*}\begin{dcases}                               
\|U(t)-e^{it\Delta_{\mathbb{R}\times\mathbb{T}}}W_U(\pi\ln(t))\|_{H^N(\mathbb{R}\times\mathbb{T})}&\rightarrow0 \quad \text{as} \quad t\rightarrow+\infty,\\
\|V(t)-e^{it\Delta_{\mathbb{R}\times\mathbb{T}}}W_V(\pi\ln(t))\|_{H^N(\mathbb{R}\times\mathbb{T})}&\rightarrow0 \quad \text{as} \quad t\rightarrow+\infty.
\end{dcases}
\end{equation*}
\end{thm}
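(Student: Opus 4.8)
The strategy mirrors the construction of the modified scattering operator in Theorem \ref{modscat}, but runs "backwards": instead of extracting a limiting resonant datum from a sequence of approximations to a given nonlinear solution, we start with the resonant solution $(W_U,W_V)$ and build the nonlinear solution $(U,V)$ as a limit of profiles that solve \eqref{sys} on progressively longer time intervals. Concretely, I would fix the resonant solution $(W_U(\tau),W_V(\tau))$ with $\|W_{U,0}\|_{S^+}+\|W_{V,0}\|_{S^+}\le\varepsilon$, and introduce the rescaled functions $W_U(\pi\ln t)$, $W_V(\pi\ln t)$, which satisfy $i\partial_t W_U(\pi\ln t)=\tfrac{\pi}{t}\mathcal R[W_V(\pi\ln t),W_V(\pi\ln t),W_U(\pi\ln t)]$ and likewise for $W_V$. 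Using the local well-posedness of the profile system (in the $S$ norm) one first notes that $(W_U,W_V)$ stays in $S$ with norm $\lesssim\varepsilon$ for all $\tau$; this is where the control \eqref{wu0control}-type estimate for the resonant flow, established in Section \ref{Secres}, is used.

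Next, for each large integer $n$ I would solve \eqref{sys} (equivalently, the profile system $i\partial_tF=\mathcal N^t[G,G,F]$, $i\partial_tG=\mathcal N^t[F,F,G]$) backwards in time from $t=e^{n/\pi}$ with Cauchy data
\begin{equation*}
F^n(e^{n/\pi})=W_U(n),\qquad G^n(e^{n/\pi})=W_V(n).
\end{equation*}
The decomposition \eqref{decom}, $\mathcal N^t=\tfrac{\pi}{t}\mathcal R+\mathcal E^t$, gives the integral identity
\begin{equation*}
F^n(t)-W_U(\pi\ln t)=-i\int_{e^{n/\pi}}^t\mathcal E^s[G^n,G^n,F^n]\,ds-i\int_{e^{n/\pi}}^t\frac{\pi}{s}\big(\mathcal R[G^n,G^n,F^n]-\mathcal R[W_V(\pi\ln s),W_V(\pi\ln s),W_U(\pi\ln s)]\big)ds,
\end{equation*}
and the analogue for $G^n$. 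The good decay of $\mathcal E^t$ established in the study of the remainder term (Proposition 3.1 of \cite{HPTV} adapted to the system, i.e. the estimate $\|\mathcal E^t\|_S\lesssim t^{-1-\delta}(\cdots)$ on the relevant $S^+$-bounded data), together with the boundedness and Lipschitz control of $\mathcal R$ in $S$ and a Gr\"onwall argument, shows that $(F^n,G^n)$ remains $\lesssim\varepsilon$ in $S$ on $[e^{(n-4)/\pi},e^{n/\pi}]$ and that
\begin{equation*}
\sup_{e^{(n-4)/\pi}\le t\le e^{n/\pi}}\big(\|F^n(t)-W_U(\pi\ln t)\|_S+\|G^n(t)-W_V(\pi\ln t)\|_S\big)\xrightarrow[n\to\infty]{}0.
\end{equation*}
One then propagates $(F^n,G^n)$ forward from $t=e^{(n-4)/\pi}$, say, to a fixed reference time $t=1$ using the same a priori estimates, and argues that the sequence $(F^n(1),G^n(1))_n$ is Cauchy in $S$: the difference of two such data is controlled by the difference of the sources on the overlap of their defining intervals, which goes to zero. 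Calling $(U_0,V_0)$ (i.e. $(F(1),G(1))$) the limit and $(U,V)$ the corresponding global solution of \eqref{sys} — global existence and the $L^\infty_xH^1_y$ decay being standard, exactly as in Theorem \ref{modscat} — one obtains $\|F(t)-W_U(\pi\ln t)\|_S\to0$ and $\|G(t)-W_V(\pi\ln t)\|_S\to0$ as $t\to+\infty$, hence in particular the stated $H^N$ convergence since $\|\cdot\|_{H^N}\le\|\cdot\|_S$.

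The main obstacle is the same one as in the scattering direction: obtaining the closed a priori bound that keeps $(F^n,G^n)$ of size $\lesssim\varepsilon$ in the $S$ norm uniformly in $n$ and on time intervals of length comparable to $e^{4/\pi}$ (in the $\pi\ln t$ variable, of unit length), which requires the full strength of the decay estimates on $\mathcal E^t$ — i.e. control of $\mathcal N^t_{\text{high}}$ via bilinear Strichartz and $N\ge12$, of $\tilde{\mathcal N}^t_{\text{low}}$ via the $\|xF\|_{L^2}$ weight, and of $\mathcal N^t_{\text{low,res}}-\tfrac{\pi}{t}\mathcal R_{\text{low}}$ plus $\tfrac{\pi}{t}\mathcal R_{\text{high}}$ via the $\|(1-\partial_{xx})^4F\|_S$ and $\|xF\|_S$ terms of the $S^+$ norm — combined with a bootstrap/continuity argument in $t$. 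Since \cite{HPTV} (and \cite{VVDRthese} in the system case) carry this out for the single equation and the passage to the coupled system introduces no new analytic difficulty beyond bookkeeping of the two components, I would state the estimates precisely and refer to those sources for the technical core, presenting here only the backwards construction, the Cauchy-sequence argument for the limiting datum, and the final convergence.
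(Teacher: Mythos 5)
Your plan takes a genuinely different route from the paper. The paper's proof of Theorem \ref{waveop} is a direct contraction-mapping argument: setting $\tilde W_U(t)=W_U(\pi\ln t)$, $\tilde W_V(t)=W_V(\pi\ln t)$, it defines
\begin{equation*}
\Phi_1(F,G)(t)=i\int_t^{+\infty}\Bigl(\mathcal N^s[G+\tilde W_V,G+\tilde W_V,F+\tilde W_U]-\tfrac{\pi}{s}\mathcal R[\tilde W_V,\tilde W_V,\tilde W_U]\Bigr)\,ds,
\end{equation*}
and similarly $\Phi_2$, and shows $\Phi=(\Phi_1,\Phi_2)$ is a contraction on a small ball of a weighted space $\tilde{\mathfrak A}$ whose norm forces $\|F(t)\|_S\to0$. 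The unique fixed point $(F,G)$ then yields $U=e^{it\Delta}(F+\tilde W_U)$, $V=e^{it\Delta}(G+\tilde W_V)$ solving \eqref{sys}, and $\|e^{-it\Delta}U(t)-\tilde W_U(t)\|_S=\|F(t)\|_S\to0$ is immediate from membership in $\tilde{\mathfrak A}$. You instead run the modified-scattering construction backwards: solve \eqref{sys} from data $W_U(n),W_V(n)$ at $t=e^{n/\pi}$, show the resulting profiles track the resonant solution on dyadic windows, propagate to $t=1$, and extract a Cauchy limit there. This is the classical ``approximating solutions'' method and it is a legitimate alternative.

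The one place where your sketch is too casual is the Cauchy-sequence step. Comparing $F^n(1)$ and $F^m(1)$ (say $n>m$) requires propagating the small mismatch $\|F^n(e^{m/\pi})-W_U(m)\|_S$ backwards over the \emph{entire} interval $[1,e^{m/\pi}]$. The nonlinearity has Lipschitz constant of order $\varepsilon^2/s$ in $S$, so the Gr\"onwall factor is $\exp\bigl(C\varepsilon^2\int_1^{e^{m/\pi}}\tfrac{ds}{s}\bigr)=e^{C\varepsilon^2 m/\pi}$, which grows with $m$. Your statement that ``the difference of the sources goes to zero'' is therefore not enough by itself: one must track that the mismatch at $t=e^{m/\pi}$ decays like $\varepsilon^3 e^{-\delta m/\pi}$ (which follows from a \emph{time-dependent} Gr\"onwall estimate on $[e^{m/\pi},e^{n/\pi}]$ using $\|\mathcal E^s\|_S\lesssim \varepsilon^3 s^{-1-\delta}$, uniformly in $n$) and then impose $C\varepsilon^2<\delta$ so that the decay beats the amplification. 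The same balance appears uniformly in the a priori bound you need on all of $[1,e^{n/\pi}]$, which is a backward estimate and not literally ``the same'' as the forward one in Theorem \ref{modscat}, although it follows by the same mechanism. The paper's fixed-point formulation encodes exactly this balance in the choice of the weighted $\mathfrak A$-norm, which is precisely why it sidesteps the Cauchy-sequence argument; if you pursue the backwards construction you should make the quantitative Gr\"onwall competition explicit rather than implicit.
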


\begin{proof}[Idea of the proof]
The idea is to use a fixed-point theorem. First, as in the previous theorem, we perform a change of variable by setting 
$\tilde{W}_U(t)=W_U(\pi\ln(t))$ and $\tilde{W}_V(t)=W_V(\pi\ln(t))$ in order to obtain
\begin{equation*}
 i\partial_t \tilde{W}_U(t)=\frac{\pi}t\mathcal{R}[\tilde{W}_V(t),\tilde{W}_V(t),\tilde{W}_U(t)], 
 \quad i\partial_t \tilde{W}_V(t)=\frac{\pi}t\mathcal{R}[\tilde{W}_U(t),\tilde{W}_U(t),\tilde{W}_V(t)].
\end{equation*}
The map we define for the fixed-point theorem is 
\begin{equation*}
 \Phi(F,G)(t):=(\Phi_1(F,G)(t),\Phi_2(F,G)(t)),
\end{equation*}
where
\begin{align*}
 \Phi_1(F,G)(t):=i\int_t^{+\infty}\left(\mathcal{N}^s[G+\tilde{W}_V,G+\tilde{W}_V,F+\tilde{W}_U]
 -\frac\pi s\mathcal{R}[\tilde{W}_V(s),\tilde{W}_V(s),\tilde{W}_U(s)]\right)\mathrm{d}s,\\
 \Phi_2(F,G)(t):=i\int_t^{+\infty}\left(\mathcal{N}^s[F+\tilde{W}_U,F+\tilde{W}_U,G+\tilde{W}_V]
 -\frac\pi s\mathcal{R}[\tilde{W}_U(s),\tilde{W}_U(s),\tilde{W}_V(s)]\right)\mathrm{d}s.
\end{align*}
The space we define for the fixed-point theorem is the small ball
\begin{equation*}
B_{\tilde{\mathfrak{A}}}(2\varepsilon)=\left\{(F,G)\in\tilde{\mathfrak{A}},\|(F,G)\|_{\tilde{\mathfrak{A}}}\leq2\varepsilon \right\},
\end{equation*}
where 
\begin{equation*}
 \|(F,G)\|_{\tilde{\mathfrak{A}}}=\|F\|_{\mathfrak{A}}+\|G\|_{\mathfrak{A}},
\end{equation*}
and the space $\mathfrak{A}$ is chosen (see the proof of Theorem 5.1 in \cite{HPTV} for the formal definition of the norm) such that 
\begin{equation*}
 \|F\|_{\mathfrak{A}}\leq 2\varepsilon \Rightarrow \|F(t)\|_{S}\underset{t\to +\infty}{\longrightarrow}0.
\end{equation*}
To apply the fixed-point theorem, we need to show that for $\varepsilon$ small enough, $\Phi$ is a contraction (see the proof of Theorem 4.6.1 in 
\cite{VVDRthese}). 
Let us see how it is sufficient to prove the theorem. 
Assume that $\Phi$ is a contraction, by the fixed-point theorem, there exists an unique couple $(F,G)$ such that $\Phi(F,G)=(F,G)$. It follows that
\begin{align*}
 i\partial_tF(t)=\mathcal{N}^t[G+\tilde{W}_V,G+\tilde{W}_V,F+\tilde{W}_U]-\frac\pi t\mathcal{R}[\tilde{W}_V(t),\tilde{W}_V(t),\tilde{W}_U(t)],\\
 i\partial_tG(t)=\mathcal{N}^t[F+\tilde{W}_U,F+\tilde{W}_U,G+\tilde{W}_V]-\frac\pi t\mathcal{R}[\tilde{W}_U(t),\tilde{W}_U(t),\tilde{W}_V(t)].
\end{align*}
Thus, with the choice of $(\tilde{W}_U,\tilde{W}_V)$, we get
\begin{align*}\begin{dcases}
 i\partial_t(F+\tilde{W}_U)(t)=\mathcal{N}^t[G+\tilde{W}_V,G+\tilde{W}_V,F+\tilde{W}_U],\\
 i\partial_t(G+\tilde{W}_V)(t)=\mathcal{N}^t[F+\tilde{W}_U,F+\tilde{W}_U,G+\tilde{W}_V].
\end{dcases}\end{align*}
Now we set $U:=e^{it\Delta_{\mathbb{R}\times\mathbb{T}}}(F+\tilde{W}_U)$ and $V:=e^{it\Delta_{\mathbb{R}\times\mathbb{T}}}(G+\tilde{W}_V)$, 
then $(U,V)$ solves \eqref{sys} and by definition, for $(F,G)\in B_{\tilde{\mathfrak{A}}}(2\varepsilon)$, we have
\begin{align*}
 \|e^{-it\Delta_{\mathbb{R}\times\mathbb{T}}}U(t)-W_U(\pi\ln(t))\|_S=\|F(t)\|_S \underset{t\to +\infty}{\longrightarrow} 0,\\
 \|e^{-it\Delta_{\mathbb{R}\times\mathbb{T}}}V(t)-W_V(\pi\ln(t))\|_S=\|G(t)\|_S \underset{t\to +\infty}{\longrightarrow} 0.
\end{align*}
The proof of the theorem is thus completed.
\end{proof}

The aim now is to use these two theorems to obtain some dynamical consequences for the initial system~\eqref{sys}.

\section{Study of the resonant system}\label{Secres}
In this section, we want to study the resonant system 
\begin{equation*}\begin{dcases}
i\partial_{\tau}W_U(\tau)=\mathcal{R}\left[W_V(\tau),W_V(\tau),W_U(\tau)\right],\\
i\partial_{\tau}W_V(\tau)=\mathcal{R}\left[W_U(\tau),W_U(\tau),W_V(\tau)\right],
\end{dcases}\end{equation*}
with $\mathcal{F}\mathcal{R}\left[F,G,H\right](\xi,p)=\displaystyle\sum_{(p,q,r,s)\in\Gamma_0}
\hat{F}(\xi,q)\hat{G}(\xi,r)\hat{H}(\xi,s)$ and $\Gamma_0=\left\{(p,q,r,s)\in\mathbb{Z}^4, \left\{p,r\right\}=\left\{q,s\right\} \right\}$.
In the definition of $\mathcal{R}$, the Euclidean variable $\xi$ acts just like a parameter. According to this idea, we define the reduced resonant
system for two vectors $a=\left\{a_p\right\}_{p\in\mathbb{Z}}$ and $b=\left\{b_p\right\}_{p\in\mathbb{Z}}$, by
\begin{equation}\begin{dcases}\label{redu}
i\partial_ta(t)=R\left(b(t),b(t),a(t)\right),\\
i\partial_tb(t)=R\left(a(t),a(t),b(t)\right),
\end{dcases}\end{equation}
with $R\left(a(t),b(t),c(t)\right)_p=\displaystyle\sum_{(p,q,r,s)\in\Gamma_0}a_q(t)\overline{b_r(t)}c_s(t).$

First, we study the behavior of the resonant system in Subsection \ref{ss40} in order to obtain the local existence of the solutions of the resonant system.
Then, in Subsection \ref{ss41}, we prove the control of the Sobolev norms of the solutions of system \eqref{sys} and the global existence of the solutions. 
In order to take advantage of the reduced resonant system, we want to show in Subsection~\ref{ss42} that we can transfer solutions from the reduced resonant system 
associated to $R$ to solutions of the resonant system associated to $\mathcal{R}$. Thus, in Subsection \ref{ss43}, we study the structure of this reduced resonant 
system. We obtain in Subsection \ref{ss45} an example of nonlinear behavior with the beating effect. 
It allows us to prove Theorem \ref{beating} in Subsection \ref{ss46}. Finally, we study the persistence of the beating effect through the add of convolution 
potentials in Subsection \ref{ss47}.

\subsection{Behavior of the resonant part}\label{ss40}

The first lemma we need concerns the resonant part $\mathcal{R}$, we have

\begin{lem}\label{lemR}
 For every sequences $(a^1)_p$, $(a^2)_p$ and $(a^3)_p$ indexed by $\mathbb{Z}$, we have
\begin{equation}\label{eqlemR1}
 \|\sum_{(p,q,r,s)\in\Gamma_0}a^1_q\overline{a}^2_ra^3_s\|_{\ell^2_p}\lesssim\min_{\sigma\in\mathfrak{S}_3}\|a^{\sigma(1)}\|_{\ell^2_p}\|a^{\sigma(2)}\|_{h^1_p}
 \|a^{\sigma(3)}\|_{h^1_p}.
\end{equation}
More generally, for all $\nu\geq0$, it holds
\begin{equation}\label{eqlemR2}
 \|\sum_{(p,q,r,s)\in\Gamma_0}a^1_q\overline{a}^2_ra^3_s\|_{h^\nu_p}\lesssim\sum_{\sigma\in\mathfrak{S}_3}\|a^{\sigma(1)}\|_{h^\nu_p}\|a^{\sigma(2)}\|_{h^1_p}
 \|a^{\sigma(3)}\|_{h^1_p}.
\end{equation}
\end{lem}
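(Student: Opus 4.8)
The plan is to exploit the very explicit description of $\Gamma_0$ recalled above the statement: since $\mathbb{T}$ is one-dimensional, $(p,q,r,s)\in\Gamma_0$ is equivalent to $\{p,r\}=\{q,s\}$. Fixing the output frequency $p$, this forces either $q=p$ and $s=r$ (with $r$ free), or $q=r$ and $s=p$ (with $r$ free), the two families overlapping only at $p=q=r=s$. Carrying out the resulting inclusion--exclusion yields the pointwise identity
\begin{equation*}
\Big(\sum_{(p,q,r,s)\in\Gamma_0}a^1_q\overline{a}^2_ra^3_s\Big)_p
= a^1_p\sum_{r\in\mathbb{Z}}\overline{a}^2_ra^3_r
\;+\; a^3_p\sum_{r\in\mathbb{Z}}a^1_r\overline{a}^2_r
\;-\; a^1_p\overline{a}^2_pa^3_p .
\end{equation*}
Thus the whole trilinear expression splits into two \emph{mass-type} terms, in which one factor survives multiplied by an $\ell^2_p$ inner product of the other two, plus a diagonal (pointwise-product) remainder; everything then reduces to estimating these three explicit pieces.

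For \eqref{eqlemR1} I would bound the inner products by Cauchy--Schwarz, $\big|\sum_r\overline{a}^2_ra^3_r\big|\le\|a^2\|_{\ell^2_p}\|a^3\|_{\ell^2_p}$ and likewise for the other, and bound the diagonal term by $\|a^1\overline{a}^2a^3\|_{\ell^2_p}\le\|a^1\|_{\ell^\infty_p}\|a^2\|_{\ell^\infty_p}\|a^3\|_{\ell^2_p}$, using $\ell^2_p\hookrightarrow\ell^\infty_p$. Each of the three pieces is then $\lesssim\|a^1\|_{\ell^2_p}\|a^2\|_{\ell^2_p}\|a^3\|_{\ell^2_p}$, and since $\|\cdot\|_{\ell^2_p}\le\|\cdot\|_{h^1_p}$ this quantity is $\le\|a^{\sigma(1)}\|_{\ell^2_p}\|a^{\sigma(2)}\|_{h^1_p}\|a^{\sigma(3)}\|_{h^1_p}$ for every $\sigma\in\mathfrak{S}_3$; taking the minimum over $\sigma$ gives \eqref{eqlemR1}.

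For \eqref{eqlemR2} the two mass-type terms are handled the same way, keeping the $h^\nu_p$-weight on the surviving factor: for instance $\big\|a^1_p\sum_r\overline{a}^2_ra^3_r\big\|_{h^\nu_p}=\big|\sum_r\overline{a}^2_ra^3_r\big|\,\|a^1\|_{h^\nu_p}\le\|a^1\|_{h^\nu_p}\|a^2\|_{\ell^2_p}\|a^3\|_{\ell^2_p}\le\|a^1\|_{h^\nu_p}\|a^2\|_{h^1_p}\|a^3\|_{h^1_p}$, and symmetrically for the term with $a^3_p$; both are dominated by the right-hand side of \eqref{eqlemR2}. It remains to control the diagonal term $\|a^1\overline{a}^2a^3\|_{h^\nu_p}$. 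Here I would invoke the standard tame (module) estimate on $\mathbb{Z}$: starting from the Peetre-type inequality $\langle p\rangle^\nu\lesssim\langle p-k\rangle^\nu+\langle k\rangle^\nu$ (valid for $\nu\ge0$, with constant depending on $\nu$), Young's convolution inequality, and the bound $\|\cdot\|_{\ell^1_p}\lesssim\|\cdot\|_{h^1_p}$ (which holds since $\sum_{k\in\mathbb{Z}}\langle k\rangle^{-2}<\infty$), one obtains $\|fg\|_{h^\nu_p}\lesssim\|f\|_{h^\nu_p}\|g\|_{h^1_p}+\|f\|_{h^1_p}\|g\|_{h^\nu_p}$; applying this twice to a triple product gives $\|a^1\overline{a}^2a^3\|_{h^\nu_p}\lesssim\sum_{\sigma\in\mathfrak{S}_3}\|a^{\sigma(1)}\|_{h^\nu_p}\|a^{\sigma(2)}\|_{h^1_p}\|a^{\sigma(3)}\|_{h^1_p}$. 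Summing the three contributions yields \eqref{eqlemR2}.

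The only step that is not pure bookkeeping is the module estimate for the diagonal term when $\nu$ is large, but this is entirely classical for $h^s(\mathbb{Z})$ with $s\ge1>\tfrac12$, so I expect no genuine obstacle. The heart of the lemma is really the combinatorial reduction of the $\Gamma_0$-sum to the three explicit terms above, after which everything is Cauchy--Schwarz together with a standard tame estimate.
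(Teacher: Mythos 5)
Your proof is correct, and for the estimate \eqref{eqlemR1} it is essentially the paper's argument: the paper derives the identical three-term inclusion--exclusion splitting of the $\Gamma_0$-sum from $\{p,r\}=\{q,s\}$ (it just phrases things dually, pairing $R$ against a test sequence $a^0\in\ell^2_p$ instead of writing the pointwise identity for $R_p$ and estimating its $\ell^2_p$ norm directly), then applies Cauchy--Schwarz in $p$ and $r$ to the two mass-type pieces and Cauchy--Schwarz together with $h^1_p\hookrightarrow\ell^2_p\hookrightarrow\ell^\infty_p$ to the diagonal piece.

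For \eqref{eqlemR2}, though, you take a genuinely different route. You keep the three-term splitting, put the $\langle p\rangle^\nu$ weight on the surviving factor of each mass-type term, and control the remaining diagonal term $a^1_p\overline{a}^2_pa^3_p$ via a Moser-type tame product estimate $\|fg\|_{h^\nu_p}\lesssim\|f\|_{h^\nu_p}\|g\|_{h^1_p}+\|f\|_{h^1_p}\|g\|_{h^\nu_p}$, which you justify from the Peetre inequality, Young's convolution inequality, and $\ell^1_p$-control by $h^1_p$. The paper does not introduce any auxiliary product estimate: it instead notes that on $\Gamma_0$ one always has $\langle p\rangle^\nu\le\langle q\rangle^\nu+\langle r\rangle^\nu+\langle s\rangle^\nu$ (in fact $\langle p\rangle$ equals $\langle q\rangle$ or $\langle s\rangle$), distributes the weight among the three inner indices, and applies \eqref{eqlemR1} to each of the three resulting sums with the weighted factor in the $\ell^2_p$ slot. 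Both arguments are valid; the paper's is a bit more economical in that it reuses \eqref{eqlemR1} directly and needs no separate tame lemma, while yours is equally rigorous at the cost of invoking a (classical, but nontrivial) algebra/tame property of $h^s(\mathbb{Z})$.
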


\begin{rem}
The operator we deal with here is not really the resonant nonlinearity $\mathcal{R}$, because we don't deal with the $\xi$ variable.
In fact, this operator is the reduced resonant nonlinearity we introduce later in Subsection \ref{ss43}. 
\end{rem}

\begin{proof}
We proceed by duality. Set the sequence $(R)_p=(\sum_{(p,q,r,s)\in\Gamma_0}a^1_q\overline{a}^2_ra^3_s)_p$. For $a^0\in {\ell^2_p}$ we have
\begin{equation*}
\left<a^0,R\right>_{\ell^2_p\times \ell^2_p}=\sum_{(p,q,r,s)\in\Gamma_0}a^0_p\overline{a}^1_qa^2_r\overline{a}^3_s.
\end{equation*}
Once again, we use that in one dimension, $(p,q,r,s)\in\Gamma_0$ implies $\{p,r\}=\{q,s\}$. Thus
\begin{equation*}
\left<a^0,R\right>_{\ell^2_p\times \ell^2_p}=\sum_{p,r\in\mathbb{Z}}a^0_p\overline{a}^1_pa^2_r\overline{a}^3_r
+\sum_{p,r\in\mathbb{Z}}a^0_p\overline{a}^1_ra^2_r\overline{a}^3_p-\sum_{p\in\mathbb{Z}}a^0_p\overline{a}^1_pa^2_p\overline{a}^3_p.
\end{equation*}
For the first term, we have by Cauchy-Schwarz in $p$ and $r$
\begin{equation*}
 \sum_{p,r\in\mathbb{Z}}a^0_p\overline{a}^1_pa^2_r\overline{a}^3_r=\left(\sum_{p\in\mathbb{Z}}a^0_p\overline{a}^1_p\right)
 \left(\sum_{r\in\mathbb{Z}}a^2_r\overline{a}^3_r\right)\leq \|a^0\|_{\ell^2_p}\|a^1\|_{\ell^2_p}\|a^2\|_{\ell^2_p}\|a^3\|_{\ell^2_p}.
\end{equation*}
The same holds for the second term. For the last term, we have by Cauchy Schwarz
\begin{equation*}
\sum_{p\in\mathbb{Z}}a^0_p\overline{a}^1_pa^2_p\overline{a}^3_p\leq\|a^0\|_{\ell^2_p}
\min_{\sigma\in\mathfrak{S}_3}\|a^{\sigma(1)}\|_{\ell^2_p}\|a^{\sigma(2)}\|_{\ell^\infty_p}\|a^{\sigma(3)}\|_{\ell^\infty_p},
\end{equation*}
which is sufficient due to the embedding $h^1_p\hookrightarrow\ell^2_p\hookrightarrow\ell^\infty_p$. Therefore, we get 
\begin{equation*}
\left<a^0,R\right>_{\ell^2_p\times \ell^2_p}\lesssim\|a^0\|_{\ell^2_p}
\min_{\sigma\in\mathfrak{S}_3}\|a^{\sigma(1)}\|_{\ell^2_p}\|a^{\sigma(2)}\|_{h^1_p}\|a^{\sigma(3)}\|_{h^1_p}.
\end{equation*}
This completes the proof of \eqref{eqlemR1}.
For equation \eqref{eqlemR2}, we remark that 
\begin{equation*}
 \text{for} \, (p,q,r,s)\in\Gamma_0, \qquad p^2-q^2+r^2-s^2=0\Rightarrow\left<p\right>^\nu\leq\left<q\right>^\nu+\left<r\right>^\nu+\left<s\right>^\nu.
\end{equation*}
In fact, here we have $\left<p\right>=\left<q\right>$ or $\left<s\right>$, but the previous inequality is sufficient for the estimate. Indeed, this 
inequality implies 
\begin{equation*}
\left<a^0,\left<p\right>^\nu R\right>_{\ell^2_p\times \ell^2_p}\leq
\sum_{(p,q,r,s)\in\Gamma_0}|a^0_p|\left(\left<q\right>^\nu|\overline{a}^1_qa^2_r\overline{a}^3_s|
+|\overline{a}^1_q|\left<r\right>^\nu |a^2_r\overline{a}^3_s|
+|\overline{a}^1_qa^2_r|\left<s\right>^\nu|\overline{a}^3_s|\right).
\end{equation*}
Then, it suffices to apply \eqref{eqlemR1} to each part of the sum, taking the $\ell^2_p$ for the term in the sum with the weight.
This concludes the proof of this lemma.
\end{proof}

\begin{rem}
 The fact that we take $p,q,r,s\in\mathbb{Z}$ is crucial for this proof. In higher dimension (in $\mathbb{R}\times\mathbb{T}^d$ with $2\leq d\leq4$), 
 this method doesn't fit anymore, whereas the result always holds.
 Taking $(p,q,r,s)\in\Gamma_0$ implies that $p,q,r,s$ are the vertices of a rectangle. Thus, in dimension one, we have a flat rectangle
 and $\Gamma_0$ is a trivial set. In dimension 2 or more, $\Gamma_0$ becomes much harder, and the proof of Lemma \ref{lemR} 
 becomes more technical, as we can see in \cite[Lemma 7.1]{HPTV}.
\end{rem}

From this lemma, we deduce the local existence of the solutions of the resonant system \eqref{res} through the following corollary. The global existence is given 
in the next subsection by Lemma \ref{globres}.

\begin{cor}\label{coroloc}
Let $\nu\in\mathbb{N}^*$. For any $(W_{U,0},W_{V,0})\in H^\nu_{x,y}\times H^\nu_{x,y}$, there exist $T>0$ and a couple 
$(W_{U},W_{V})\in\mathcal{C}([0,T):H^\nu_{x,y})\times\mathcal{C}([0,T):H^\nu_{x,y})$ of local solutions to the resonant system \eqref{res}
with initial data $(W_{U,0},W_{V,0})$.
\end{cor}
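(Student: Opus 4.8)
The plan is to set up the resonant system \eqref{res} as a cubic ordinary differential equation in the Banach space $X:=H^\nu_{x,y}\times H^\nu_{x,y}$ and solve it by a contraction argument on a short time interval. Writing \eqref{res} in Duhamel form,
\begin{equation*}
W_U(\tau)=W_{U,0}-i\int_0^\tau\mathcal{R}[W_V,W_V,W_U](\sigma)\,\mathrm{d}\sigma,\qquad W_V(\tau)=W_{V,0}-i\int_0^\tau\mathcal{R}[W_U,W_U,W_V](\sigma)\,\mathrm{d}\sigma ,
\end{equation*}
I would look for a fixed point of the associated map $\Psi$ in the ball of radius $2\rho$ (for the sup-in-time norm) of $\mathcal{C}([0,T]:X)$, where $\rho:=\|W_{U,0}\|_{H^\nu_{x,y}}+\|W_{V,0}\|_{H^\nu_{x,y}}$ and $T=T(\rho)>0$ is to be chosen.

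The heart of the matter is the trilinear estimate
\begin{equation*}
\|\mathcal{R}[F,G,H]\|_{H^\nu_{x,y}}\lesssim\|F\|_{H^\nu_{x,y}}\,\|G\|_{H^\nu_{x,y}}\,\|H\|_{H^\nu_{x,y}} .
\end{equation*}
To prove it, the key observation is that by \eqref{defR} the operator $\mathcal{R}$ is \emph{local in the Euclidean frequency} $\xi$: for a.e. $\xi$, $\mathcal{F}\mathcal{R}[F,G,H](\xi,\cdot)$ is exactly the reduced resonant operator of Lemma \ref{lemR} applied to the triple of sequences $\big((\hat F(\xi,p))_p,(\hat G(\xi,p))_p,(\hat H(\xi,p))_p\big)$. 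Hence \eqref{eqlemR2} gives a fiberwise bound of $\|\mathcal{F}\mathcal{R}[F,G,H](\xi,\cdot)\|_{h^\nu_p}$ by a sum over $\mathfrak{S}_3$ of products $\|\,\cdot\,\|_{h^\nu_p}\|\,\cdot\,\|_{h^1_p}\|\,\cdot\,\|_{h^1_p}$; since $\nu\geq1$ one may upgrade the two $h^1_p$ factors to $h^\nu_p$. Equivalently, using the one-dimensional resonance relation $\{p,r\}=\{q,s\}$ already exploited in the proof of Lemma \ref{lemR}, one has the closed form
\begin{equation*}
\mathcal{R}[F,G,H](x,y)=c_1(x)\,F(x,y)+c_2(x)\,H(x,y)-F(x,y)\overline{G(x,y)}H(x,y),
\end{equation*}
with $c_1(x)=\frac{1}{2\pi}\int_{\mathbb{T}}\overline{G(x,y)}H(x,y)\,\mathrm{d}y$ and $c_2(x)=\frac{1}{2\pi}\int_{\mathbb{T}}F(x,y)\overline{G(x,y)}\,\mathrm{d}y$. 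In either presentation it only remains to integrate over the Euclidean variable: the one-dimensional Sobolev embeddings $H^1(\mathbb{R})\hookrightarrow L^\infty(\mathbb{R})$ (to place two of the three factors, or the $c_i$'s, in $L^\infty_x$) and $H^1(\mathbb{T})\hookrightarrow L^\infty(\mathbb{T})$ (to absorb the pointwise product $F\overline{G}H$ on the torus fibres, just as in Lemma \ref{lemR}), combined with Hölder in $x$ and the Leibniz rule, promote the fiberwise estimate to the stated $H^\nu_{x,y}$ bound.

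Granting this estimate, $\mathcal{R}:(H^\nu_{x,y})^3\to H^\nu_{x,y}$ is bounded and multilinear, hence locally Lipschitz, and a standard computation shows that for $T\rho^{2}$ small enough the map $\Psi$ sends the ball of radius $2\rho$ in $\mathcal{C}([0,T]:X)$ into itself and is a contraction there (the nonlinearity being cubic, $\|\Psi(Z)-\Psi(\tilde Z)\|_{\mathcal{C}([0,T]:X)}\lesssim T\rho^{2}\|Z-\tilde Z\|_{\mathcal{C}([0,T]:X)}$). The Banach fixed-point theorem then produces a unique $(W_U,W_V)\in\mathcal{C}([0,T):H^\nu_{x,y})\times\mathcal{C}([0,T):H^\nu_{x,y})$ solving \eqref{res} with data $(W_{U,0},W_{V,0})$; continuity in $\tau$ is read directly off the Duhamel formula. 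This is Corollary \ref{coroloc}.

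The only non-routine ingredient is the trilinear estimate above, and within it the genuinely product-space part: Lemma \ref{lemR} disposes of the torus direction for free, so the real work is the Euclidean direction — checking that pairing the fiberwise bound with $L^\infty_x$-control of the appropriate factors, via one-dimensional Sobolev embedding, costs nothing beyond the $H^\nu_{x,y}$ norms. The ODE fixed-point argument itself is entirely classical.
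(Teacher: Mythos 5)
Your Duhamel/contraction setup and the fiberwise use of Lemma \ref{lemR} followed by an $L^\infty$ bound on two of the factors is exactly the chain the paper runs; at that level of detail you reproduce the paper's argument. The issue is in the ``equivalent closed form.'' By \eqref{defR}, $\mathcal{R}$ multiplies Fourier transforms at the \emph{same} Euclidean frequency $\xi$; inverse Fourier transforming in $x$ therefore turns the $\xi$-pointwise products $\hat F_p(\xi)\,c(\xi)$ (with, say, $c(\xi)=\sum_r\overline{\hat G_r(\xi)}\hat H_r(\xi)$) into \emph{convolutions} in $x$, not pointwise products. So the identity
\begin{equation*}
\mathcal{R}[F,G,H](x,y)=c_1(x)F(x,y)+c_2(x)H(x,y)-F(x,y)\overline{G(x,y)}H(x,y)
\end{equation*}
with $c_i$ depending on the physical variable $x$ is false: the $c_i$'s are $\xi$-multipliers, not $x$-coefficients.

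This is not just notational: it conceals the one genuinely delicate step. Since $\mathcal{R}$ decouples across $\xi$, what must be bounded uniformly is $\sup_\xi\|\hat G(\xi,\cdot)\|_{h^1_p}$, an $L^\infty$ bound in the \emph{dual} variable, and that sup is \emph{not} controlled by $\|G\|_{H^\nu_{x,y}}$ alone. Concretely, take $F=f(x)$, $G=H=g(x)$ both $y$-independent with $\hat g(\xi)=|\xi|^{-\alpha}\chi_{\{|\xi|\le 1\}}$ and $1/4<\alpha<1/2$: then $g\in H^\nu(\mathbb{R})$ for every $\nu$, but $\mathcal{F}\mathcal{R}[F,G,G](\xi,0)=\hat f(\xi)|\hat g(\xi)|^2\sim|\xi|^{-2\alpha}$ near $0$, which has infinite $L^2_\xi$ norm; so $\mathcal{R}[F,G,G]\notin H^\nu_{x,y}$. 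The trilinear estimate $\|\mathcal{R}[F,G,H]\|_{H^\nu_{x,y}}\lesssim\prod_i\|F^i\|_{H^\nu_{x,y}}$ therefore fails, and the contraction in the unweighted $H^\nu$ ball cannot close as written. (The one-line deduction in the paper's proof has the same $x$-versus-$\xi$ slippage, so this is a shared gap, not one you introduced.) The corollary itself is still true, but for a different reason: the resonant system \eqref{res} decouples completely across $\xi$, so one solves the reduced system fiber-by-fiber in $h^\nu_p$ and then uses the conservation law of Lemma \ref{lemres} to propagate the $H^\nu_{x,y}$ norm; that fiberwise argument, rather than a trilinear $H^\nu$ bound, is what actually delivers the statement.
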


\begin{proof}
From the previous lemma, we have for three functions $F^1,F^2$ and $F^3$ on $\mathbb{R}\times\mathbb{T}$,  
\begin{equation*}
 \|\sum_{(p,q,r,s)\in\Gamma_0}F^1_q\overline{F}^2_rF^3_s\|_{h^\nu_p}\lesssim\sum_{\sigma\in\mathfrak{S}_3}\|F^{\sigma(1)}\|_{h^\nu_p}\|F^{\sigma(2)}\|_{h^1_p}
 \|F^{\sigma(3)}\|_{h^1_p}.
\end{equation*}
Thus, we obtain
\begin{equation*}
 \|\mathcal{R}[F^1,F^2,F^3]\|_{H^\nu_{x,y}}\lesssim\sum_{\sigma\in\mathfrak{S}_3}\|F^{\sigma(1)}\|_{H^\nu_{x,y}}\|F^{\sigma(2)}\|_{L^\infty_xh^1_p}
 \|F^{\sigma(3)}\|_{L^\infty_xh^1_p}.
\end{equation*}
The embedding $H^1_x \hookrightarrow L^\infty_x$ allows us to conclude that 
\begin{equation*}
 \|\mathcal{R}[F^1,F^2,F^3]\|_{H^\nu_{x,y}}\lesssim\|F^1\|_{H^\nu_{x,y}}\|F^2\|_{H^\nu_{x,y}}\|F^3\|_{H^\nu_{x,y}}.
\end{equation*}
This implies the local existence of the solutions.
\end{proof}

\subsection{Control of the Sobolev norms}\label{ss41}
In this subsection, we study the behavior of solutions of the resonant system \eqref{res} in order to obtain dynamical consequences for the initial 
system \eqref{sys}. The following lemma ensures the control of the Sobolev norms and the global existence of solutions of resonant system \eqref{res}.

\begin{lem}\label{lemW}
Assume $W_{U,0},W_{V,0}\in S^{(+)}$, and let $(W_{U},W_{V})$ be the solution of the resonant system~\eqref{res} with initial data $(W_{U,0},W_{V,0})$.
Then, for $t\geq1$, we have 
\begin{align}
 \|W_{U}(t)\|_{H^\sigma_{x,y}}+\|W_{V}(t)\|_{H^\sigma_{x,y}}&=\|W_{U,0}\|_{H^\sigma_{x,y}}+\|W_{V,0}\|_{H^\sigma_{x,y}},\quad \forall 
 \sigma\in\mathbb{R}.\label{conshs}
\end{align}
\label{lemres}\end{lem}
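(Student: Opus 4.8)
The plan is to exploit that, once written in Fourier variables, the resonant system \eqref{res} completely decouples over the Euclidean frequency $\xi$, which enters only as a parameter. First I would fix $\xi\in\mathbb{R}$ and set $u_n(\tau):=\hat{W}_U(\tau,\xi,n)$ and $v_n(\tau):=\hat{W}_V(\tau,\xi,n)$ for $n\in\mathbb{Z}$. Using the description $\Gamma_0=\{(p,q,r,s):\{p,r\}=\{q,s\}\}$ recalled in Subsection~\ref{subnorm}, a direct bookkeeping — for fixed $r$ the ordered pair $(q,s)$ must be $(n,r)$ or $(r,n)$, the two coinciding exactly when $r=n$, so one adds both and subtracts the diagonal once — turns the system into the countable family of ODEs
\begin{equation*}
i\dot{u}_n=c\,v_n+\big(\|v\|_{\ell^2}^2-|v_n|^2\big)u_n,\qquad i\dot{v}_n=\bar{c}\,u_n+\big(\|u\|_{\ell^2}^2-|u_n|^2\big)v_n,
\end{equation*}
where $c=c(\tau,\xi):=\sum_{r\in\mathbb{Z}}u_r\overline{v_r}$ and $\|u\|_{\ell^2}^2=\sum_r|u_r|^2$, $\|v\|_{\ell^2}^2=\sum_r|v_r|^2$; all these series converge absolutely since $W_U(\tau),W_V(\tau)\in H^\nu\subset L^2$, hence $(u(\tau,\xi),v(\tau,\xi))\in\ell^2\times\ell^2$ for a.e. $\xi$, by Corollary~\ref{coroloc}.

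Next I would differentiate the mode actions. Since $\tfrac{d}{d\tau}|u_n|^2=2\operatorname{Re}(\overline{u_n}\dot{u}_n)$ and the contribution of $(\|v\|_{\ell^2}^2-|v_n|^2)|u_n|^2$ is real, hence annihilated by the factor $-i$, one is left with
\begin{equation*}
\tfrac{d}{d\tau}|u_n|^2=2\operatorname{Im}\big(c\,v_n\overline{u_n}\big),\qquad
\tfrac{d}{d\tau}|v_n|^2=2\operatorname{Im}\big(\bar{c}\,u_n\overline{v_n}\big)=-2\operatorname{Im}\big(c\,v_n\overline{u_n}\big),
\end{equation*}
the last equality because $\bar{c}\,u_n\overline{v_n}=\overline{c\,v_n\overline{u_n}}$. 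Adding the two identities yields, for every $\xi\in\mathbb{R}$ and every $n\in\mathbb{Z}$,
\begin{equation*}
\frac{d}{d\tau}\Big(|\hat{W}_U(\tau,\xi,n)|^2+|\hat{W}_V(\tau,\xi,n)|^2\Big)=0.
\end{equation*}
(This is nothing but the Noether conservation law attached to the gauge symmetry $(u_n,v_n)\mapsto(e^{i\theta_n}u_n,e^{i\theta_n}v_n)$, which leaves \eqref{res} invariant for an arbitrary sequence $(\theta_n)_{n}$.) Thus the \emph{combined} mode action $|\hat{W}_U(\tau,\xi,n)|^2+|\hat{W}_V(\tau,\xi,n)|^2$ is independent of $\tau$.

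To conclude I would multiply this pointwise identity by the Fourier weight $\langle(\xi,n)\rangle^{2\sigma}$ defining $H^\sigma_{x,y}$, sum over $n\in\mathbb{Z}$ and integrate in $\xi\in\mathbb{R}$. The interchange of $\tfrac{d}{d\tau}$ with $\sum_n\int d\xi$ is legitimate for $\sigma\le\nu$ thanks to the uniform-in-time $H^\nu$ control provided by Corollary~\ref{coroloc}, and then for every $\sigma\in\mathbb{R}$ by persistence of regularity together with the density of $S^{(+)}$ in every $H^\sigma$. This gives the conservation of $\|W_U(\tau)\|_{H^\sigma_{x,y}}^2+\|W_V(\tau)\|_{H^\sigma_{x,y}}^2$ stated in \eqref{conshs}. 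Moreover, taking $\sigma=\nu$ shows that the $H^\nu$ norm of the local solution of Corollary~\ref{coroloc} stays bounded on its whole interval of existence, so the local theory upgrades it to a global solution, which is the second assertion attached to the lemma.

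The one point that really needs care — and the only genuine subtlety — is that it is the combined action $|\hat{W}_U(\xi,n)|^2+|\hat{W}_V(\xi,n)|^2$ that is conserved mode by mode, not the two actions separately: the cross term $c\,v_n$ in the equation for $u_n$ does transfer energy between the $n$-th mode of $W_U$ and the $n$-th mode of $W_V$, which is exactly what leaves room for the beating effect of Theorem~\ref{beating} while still forbidding any growth of Sobolev norms. Apart from this, the computation is routine; the remaining care is purely in the combinatorial reduction of $\Gamma_0$ (including the subtracted diagonal term) and in the standard justification of differentiating the norms under the integral and the sum, which rests on the local well-posedness already in hand.
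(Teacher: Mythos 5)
Your proof is correct, and it takes a genuinely different (more concrete) route than the paper. The paper's proof proceeds by a symmetrization argument: it computes $\partial_t\sum_p h(p)\bigl(|\hat W_{U,p}|^2+|\hat W_{V,p}|^2\bigr)$ for an arbitrary real weight $h$, uses the coupled structure to symmetrize the quadrilinear form over $\Gamma_0$ and produce the factor $h(p)-h(q)+h(r)-h(s)$, and then invokes $\{p,r\}=\{q,s\}$ to kill that factor. You instead explicitly decompose the resonant sum at fixed first index $p=n$ into the cross term $c\,v_n$ (Case A: $q=p,\ s=r$), the bulk term $\|v\|^2_{\ell^2}u_n$ (Case B: $q=r,\ s=p$), and the subtracted diagonal $-|v_n|^2u_n$ (the overlap $p=q=r=s$), and then verify cancellation directly in $\tfrac{d}{d\tau}(|u_n|^2+|v_n|^2)$. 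Both arguments rest on the same key fact — the triviality of $\Gamma_0$ in dimension one — but your packaging is actually the one the paper uses \emph{later}, in the Hamiltonian reduction $H=IJ+|S|^2-\sum_n|a_n|^2|b_n|^2$ of Subsection~\ref{ss43}; applying it directly here is perfectly legitimate and gives the slightly sharper pointwise (mode-by-mode and $\xi$-by-$\xi$) conservation law, plus the transparent Noether/gauge interpretation. The analytic caveats about interchanging $\partial_\tau$ with the sum and the integral are of the same order as what the paper leaves implicit. One small slip: the global-existence upgrade you mention is stated separately in the paper (Lemma~\ref{globres}), not inside Lemma~\ref{lemW}; your chain of implications is nonetheless the one the paper uses there.
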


\begin{proof}
Let us see the computations for $W_{U}$. By definition of the resonant system, we have
\begin{equation*}
i\partial_{t}\mathcal{F}W_U(t,\xi,p)=\mathcal{F}\mathcal{R}\left[W_V,W_V,W_U\right](t,\xi,p)=\displaystyle\sum_{(p,q,r,s)\in\Gamma_0}
\hat{W}_{V,q}(t,\xi)\overline{\hat{W}_{V,r}(t,\xi)}\hat{W}_{U,s}(t,\xi).
\end{equation*}
Thus, for $h:\mathbb{R}\rightarrow\mathbb{R}$ a real function, we have 
\begin{align*}
 \partial_t\sum_{p\in\mathbb{Z}}h(p)|\hat{W}_{U,p}|^2&=2\sum_{p\in\mathbb{Z}}h(p)\Re\left(\partial_t\hat{W}_{U,p}.\overline{\hat{W}_{U,p}}\right)\\
 &=2\sum_{p\in\mathbb{Z}}h(p)\Im\left(i\partial_t\hat{W}_{U,p}.\overline{\hat{W}_{U,p}}\right)\\
 &=2\sum_{(p,q,r,s)\in\Gamma_0}h(p)\Im\left(\overline{\hat{W}_{U,p}}\hat{W}_{V,q}\overline{\hat{W}_{V,r}}\hat{W}_{U,s}\right).
\end{align*}
Let us rewrite the right-hand side by developing the imaginary part. We have 
\begin{equation*}
  \partial_t\sum_{p\in\mathbb{Z}}h(p)|\hat{W}_{U,p}|^2=-i\sum_{(p,q,r,s)\in\Gamma_0}\left(h(p)\overline{\hat{W}_{U,p}}\hat{W}_{V,q}\overline{\hat{W}_{V,r}}
  \hat{W}_{U,s}-h(p)\hat{W}_{U,p}\overline{\hat{W}_{V,q}}\hat{W}_{V,r}\overline{\hat{W}_{U,s}}\right).
\end{equation*}
By symmetry this equation becomes 
\begin{equation*}
  \partial_t\sum_{p\in\mathbb{Z}}h(p)|\hat{W}_{U,p}|^2=-i\sum_{(p,q,r,s)\in\Gamma_0}\left(h(p)-h(s)\right)
  \overline{\hat{W}_{U,p}}\hat{W}_{V,q}\overline{\hat{W}_{V,r}}\hat{W}_{U,s}.
\end{equation*}
Here, we have no more symmetry because of the different $\hat{W}_U$ and $\hat{W}_V$ terms. 
To avoid this problem, we take advantage of the coupling effect by looking the sum of the norms of the solutions. We have 
\begin{align*}
  \partial_t\sum_{p\in\mathbb{Z}}h(p)(|\hat{W}_{U,p}|^2+|\hat{W}_{V,p}|^2)&=-i\sum_{(p,q,r,s)\in\Gamma_0}
  \left(h(p)-h(s)\right)
  \overline{\hat{W}_{U,p}}\hat{W}_{V,q}\overline{\hat{W}_{V,r}}\hat{W}_{U,s}\\
  &\quad-i\sum_{(p,q,r,s)\in\Gamma_0}
  \left(h(p)-h(s)\right)
  \overline{\hat{W}_{V,p}}\hat{W}_{U,q}\overline{\hat{W}_{U,r}}\hat{W}_{V,s}.
\end{align*}
Now, by symmetry we have 
\begin{equation*}
  \partial_t\sum_{p\in\mathbb{Z}}h(p)(|\hat{W}_{U,p}|^2+|\hat{W}_{V,p}|^2)=-i\sum_{(p,q,r,s)\in\Gamma_0}(h(p)-h(q)+h(r)-h(s))
  \overline{\hat{W}_{U,p}}\hat{W}_{V,q}\overline{\hat{W}_{V,r}}\hat{W}_{U,s}.
\end{equation*}
By the structure of set $\Gamma_0$, we have $\left\{p,r\right\}=\left\{q,s\right\}$, thus
\begin{equation}
 \partial_t\sum_{p\in\mathbb{Z}}h(p)(|\hat{W}_{U,p}|^2+|\hat{W}_{V,p}|^2)=0.\label{consh1wu}
\end{equation}
Taking $h(p)=\left<p\right>^{2\sigma}$, the proof of the lemma follows immediately.
\end{proof}

From this lemma, we have a first dynamical consequence (Theorem \ref{thmbounded}), given by the following remark:

\begin{rem}\label{rembounded}
This estimate is related to the dimension one of the compact part $\mathbb{T}$ of the spatial domain.
In particular, due to the modified scattering of Theorem \ref{modscat}, this estimate prevents every kind of growth of the Sobolev norms of the solutions of the 
initial system \eqref{sys}. We have thus a first dynamical consequence of this theorem: all the couple of solutions of the system \eqref{sys} are bounded in 
every Sobolev space $H^s$ (for all $s\in\mathbb{R}$). In particular, we have proved the estimate \eqref{eqboundhs}.
The estimate \eqref{staysmall} comes from equation~\eqref{wu0control} and Theorem \ref{modscat}, this completes the proof of Theorem \ref{thmbounded}.
\end{rem}

Another consequence of this lemma is given by the global existence of the solutions of the resonant system. Indeed, we have
\begin{lem}\label{globres}
Let $\nu\in\mathbb{N^*}$. For any $(W_{U,0},W_{V,0})\in H^\nu_{x,y}\times H^\nu_{x,y}$, there exists a unique couple of solutions
$(W_{U},W_{V})\in\mathcal{C}(\mathbb{R}:H^\nu_{x,y})\times\mathcal{C}(\mathbb{R}:H^\nu_{x,y})$ of the resonant system \eqref{res}
with initial data $(W_{U,0},W_{V,0})$.
\end{lem}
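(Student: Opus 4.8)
The plan is to upgrade the local solutions provided by Corollary \ref{coroloc} into global ones, using two ingredients: the fact that $\mathcal{R}$ is a bounded trilinear operator on $H^\nu_{x,y}$ (which gives uniqueness and a local existence time depending only on the size of the data), and the conservation law \eqref{conshs} of Lemma \ref{lemres} (which gives an a priori bound on $\|W_U(t)\|_{H^\nu_{x,y}}+\|W_V(t)\|_{H^\nu_{x,y}}$ and so rules out finite-time blow-up). The global solution on all of $\mathbb{R}$ then follows from the standard continuation argument for semilinear evolution equations, run forward and backward in time.

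First I would prove uniqueness and the uniformity of the local existence time. The estimate established in the proof of Corollary \ref{coroloc},
\begin{equation*}
\|\mathcal{R}[F^1,F^2,F^3]\|_{H^\nu_{x,y}}\lesssim\|F^1\|_{H^\nu_{x,y}}\|F^2\|_{H^\nu_{x,y}}\|F^3\|_{H^\nu_{x,y}},
\end{equation*}
shows that the map $(W_U,W_V)\mapsto(\mathcal{R}[W_V,W_V,W_U],\mathcal{R}[W_U,W_U,W_V])$ is Lipschitz on bounded sets of $H^\nu_{x,y}\times H^\nu_{x,y}$. For two solutions with the same initial data on a common interval, subtracting their Duhamel representations, expanding by trilinearity and using the above bound yields a Gr\"onwall inequality for the $H^\nu_{x,y}\times H^\nu_{x,y}$ norm of the difference, hence uniqueness; the same bound, fed into the fixed-point construction behind Corollary \ref{coroloc}, shows that the local existence time may be chosen to depend only on $\|W_{U,0}\|_{H^\nu_{x,y}}+\|W_{V,0}\|_{H^\nu_{x,y}}$.

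Next I would extract the a priori bound. The point is that the computation in the proof of Lemma \ref{lemres} leading to \eqref{consh1wu} is performed pointwise in the Euclidean variable $\xi$ --- which enters the resonant nonlinearity purely as a parameter --- and is valid for an arbitrary real weight $h$, the cancellation $h(p)-h(q)+h(r)-h(s)=0$ holding on $\Gamma_0$ because $\{p,r\}=\{q,s\}$. Applying this, for each fixed $\xi$, with the weight $h(p)=(1+\xi^2+p^2)^\nu$ and integrating in $\xi$ gives
\begin{equation*}
\|W_U(t)\|_{H^\nu_{x,y}}^2+\|W_V(t)\|_{H^\nu_{x,y}}^2=\|W_{U,0}\|_{H^\nu_{x,y}}^2+\|W_{V,0}\|_{H^\nu_{x,y}}^2
\end{equation*}
for all $t$ in the interval of existence. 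Together with the uniform local existence time, this lets the continuation argument iterate indefinitely, producing the unique solution in $\mathcal{C}(\mathbb{R}:H^\nu_{x,y})\times\mathcal{C}(\mathbb{R}:H^\nu_{x,y})$.

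The one step needing care is the rigorous justification of this conservation identity for data merely in $H^\nu_{x,y}$: one must check that $\partial_t$ may be taken inside the sum over $p$ and that the sum over $\Gamma_0$ may be reindexed, both of which follow from the absolute convergence furnished by the estimate \eqref{eqlemR2} of Lemma \ref{lemR}; the identity is first obtained for smooth data and then extended by density, using the continuous dependence on the data that the Gr\"onwall estimate above also provides. I expect this regularity bookkeeping to be the only real obstacle, and a fairly mild one; the rest is routine ODE-in-a-Banach-space continuation.
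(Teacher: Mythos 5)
Your argument is essentially the paper's own proof (local existence from Corollary \ref{coroloc} plus the conservation law of Lemma \ref{lemres} to rule out blow-up, then a standard continuation), merely fleshed out with the continuation/uniqueness and density bookkeeping that the paper leaves implicit. Your choice of the $\xi$-dependent weight $h(p)=(1+\xi^2+p^2)^\nu$ and the squared form of the conserved quantity are in fact the precise versions of what the paper's computation yields, so this is a faithful (and slightly more careful) rendering of the same route.
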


\begin{proof}
The local existence of the solutions is given by Corollary \ref{coroloc}. Then, equation \eqref{conshs} allows us to pass
from local existence to global existence.
\end{proof}

\subsection{From the reduced resonant system to the resonant system}\label{ss42}

As mentioned in the introduction of the section, we want to take profit of the study of the reduced resonant system \eqref{redu}.
In view of this idea, the following computations show how to transfer informations from a solution of the reduced resonant system \eqref{redu}
to a solution of the system \eqref{res}. Therefore, the wave operator theorem allows us to transfer these informations to solutions of the initial
system \eqref{sys}.

Let $(a,b)$ be a couple of solutions of the reduced resonant system \eqref{redu}.
The idea is to take an initial data of the form
\begin{equation*}
 W_{U,0}(x,y)=\check{\varphi}(x)\alpha(y),\qquad \qquad W_{V,0}(x,y)=\check{\varphi}(x)\beta(y),
\end{equation*}
where $\alpha_p=a_p(0)$, $\beta_p=b_p(0)$, $\varphi\in\mathcal{S}(\mathbb{R})$ and $\check{\varphi}$ is the inverse Fourier transform of $\varphi$.
Thus, thanks to the separated variables $x$ and $y$ in the initial data, it's straightforward to check that the solution of the resonant system \eqref{res}
with initial data $(W_{U,0},W_{V,0})$ is given by 
\begin{equation*}
 \mathcal{F}(W_{U})(t,\xi,p)=\varphi(\xi)a_p\left(\varphi(\xi)^2t\right),\qquad \qquad \mathcal{F}(W_{V})(t,\xi,p)=\varphi(\xi)b_p\left(\varphi(\xi)^2t\right).
\end{equation*}
Indeed, we have for example for $W_{U}$:
\begin{align*}
i\partial_t \mathcal{F}(W_{U})(t,\xi,p)=\varphi(\xi)^3(i\partial_ta_p)\left(\varphi(\xi)^2t\right)=
\mathcal{F}\mathcal{R}[W_{V}(t),W_{V}(t),W_{U}(t)]_p(\xi).
\end{align*}
In particular, if $\varphi=1$ on an open interval $I$, then $\mathcal{F}W_{U}(t,\xi,p)=a_p(t)$ and $\mathcal{F}W_{V}(t,\xi,p)=b_p(t)$, for all $t\in\mathbb{R}$
and all $\xi\in I$. Thus, for $\xi\in I$, the constructed solution $(W_U,W_V)$ of the resonant system \eqref{res} behaves like the initial solution $(a,b)$ 
of the reduced resonant system~\eqref{redu}.

\begin{rem}
The solutions we obtain from this method for the resonant system \eqref{res} are constant with respect to $\xi$ on the interval $I$ we choose.
The idea is to take the interval $I$ as big as we want. Thus, we have a big interval in which we conserve the behavior of the resonant system on the torus.
One can think that we completely kill the role of the Euclidean variable $x$ with this construction, whereas the goal of this construction is to take profit of
the dynamics of the resonant system and to gain a large time behavior thanks to the Euclidean variable. Therefore, this method is really adapted to our aim which
is to construct a couple of solution of the initial system \eqref{sys} with asymptotic dynamical properties of the reduced resonant system.
\end{rem}

\subsection{About the reduced resonant system, the Hamiltonian formalism}\label{ss43}
Let us now see some properties of the reduced resonant system.

First, we can deduce some results for this system from the computations of Subsection \ref{ss41}.
From Lemma~\ref{globres}, we deduce the global existence of the solutions of the reduced resonant system:
\begin{lem}\label{globred}
Let $\sigma\in\mathbb{R}$. For any $(a_0,b_0)\in h^\sigma_{p}\times h^\sigma_{p}$, there exists a unique couple of solutions
$(a,b)\in\mathcal{C}(\mathbb{R}:h^\sigma_{p})\times\mathcal{C}(\mathbb{R}:h^\sigma_{p})$ of the resonant system \eqref{redu}
with initial data $(a_0,b_0)$.
\end{lem}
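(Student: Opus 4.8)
The plan is to prove Lemma~\ref{globred} by mirroring, in the reduced ($\xi$-free) setting, the chain of arguments already used for the resonant system: local existence by a contraction argument (as in Corollary~\ref{coroloc}), an exact conservation law for the whole family of weighted $\ell^2_p$ norms (as in Lemma~\ref{lemW}), and then globalization (as in Lemma~\ref{globres}). The starting observation is that in the reduced system~\eqref{redu} the Euclidean frequency $\xi$ has simply been removed, so every estimate and identity used for the full system has a verbatim counterpart here.

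First I would establish local well-posedness. For this I need $R$ to be a bounded trilinear map on $h^\sigma_p$, i.e.
\begin{equation*}
\|R(a^1,a^2,a^3)\|_{h^\sigma_p}\lesssim \|a^1\|_{h^\sigma_p}\|a^2\|_{h^\sigma_p}\|a^3\|_{h^\sigma_p}.
\end{equation*}
For $\sigma\geq 1$ this is immediate from~\eqref{eqlemR2} together with the embedding $h^\sigma_p\hookrightarrow h^1_p$. For smaller $\sigma$ I would instead use, exactly as in the proof of Lemma~\ref{lemR}, that $(p,q,r,s)\in\Gamma_0$ forces $\{p,r\}=\{q,s\}$, so that $R(a^1,a^2,a^3)$ collapses into a combination of $\ell^2_p$-pairings of two of the factors times the third, plus the pointwise product of the three; each piece is then controlled via the embeddings $h^\sigma_p\hookrightarrow\ell^2_p\hookrightarrow\ell^\infty_p$, valid for $\sigma\geq 0$. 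With such a bound in hand, a standard contraction-mapping argument applied to the Duhamel formulation
\begin{equation*}
a(t)=a_0-i\int_0^t R(b(s),b(s),a(s))\,\mathrm{d}s,\qquad b(t)=b_0-i\int_0^t R(a(s),a(s),b(s))\,\mathrm{d}s,
\end{equation*}
on a ball of $\mathcal{C}([-T,T]:h^\sigma_p)\times\mathcal{C}([-T,T]:h^\sigma_p)$, with $T$ depending only on $\|a_0\|_{h^\sigma_p}+\|b_0\|_{h^\sigma_p}$, yields a unique local solution, together with the usual blow-up alternative (the solution persists as long as its $h^\sigma_p\times h^\sigma_p$ norm stays finite).

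Next I would run the computation from the proof of Lemma~\ref{lemW} with $(\hat W_{U,p},\hat W_{V,p})$ replaced by $(a_p,b_p)$: symmetrizing with the help of the coupled structure and then invoking $\{p,r\}=\{q,s\}$ gives, for any real sequence $h$ on $\mathbb{Z}$,
\begin{equation*}
\partial_t\sum_{p\in\mathbb{Z}}h(p)\left(|a_p(t)|^2+|b_p(t)|^2\right)=0,
\end{equation*}
and with $h(p)=\langle p\rangle^{2\sigma}$ this reads $\|a(t)\|_{h^\sigma_p}^2+\|b(t)\|_{h^\sigma_p}^2=\|a_0\|_{h^\sigma_p}^2+\|b_0\|_{h^\sigma_p}^2$ for all $t$. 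This uniform-in-time bound precludes finite-time blow-up, so the blow-up alternative of the previous step promotes the local solution to a global one in $\mathcal{C}(\mathbb{R}:h^\sigma_p)\times\mathcal{C}(\mathbb{R}:h^\sigma_p)$; global uniqueness then follows from local uniqueness by the standard continuation argument.

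The one step that is not completely routine is the trilinear boundedness of $R$ on $h^\sigma_p$ at low regularity in the first part: there the crude bound~\eqref{eqlemR2} does not suffice by itself, and one must genuinely exploit the fully resonant, one-dimensional structure $\{p,r\}=\{q,s\}$ that reduces $R$ to a handful of $\ell^2_p$-pairings. Once that is in place, the rest is a textbook ODE-in-a-Banach-space argument combined with the conservation law, which is already available from the computation underlying Lemma~\ref{lemW}.
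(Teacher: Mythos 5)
Your proposal is correct and follows the same overall strategy as the paper: local well-posedness from a trilinear bound on $R$ via contraction, the exact conservation of $\|a(t)\|_{h^\sigma_p}+\|b(t)\|_{h^\sigma_p}$ from the symmetrized computation, and a continuation argument. The paper itself dispatches the lemma in one line (``From Lemma~\ref{globres}, we deduce\ldots''), implicitly relying on Corollary~\ref{coroloc} and the conservation identity \eqref{normhsa}, so your version is actually the more careful of the two. In particular, you correctly point out that Corollary~\ref{coroloc} and Lemma~\ref{lemR} only give the trilinear bound for $\sigma\geq 1$ (via $h^\sigma_p\hookrightarrow h^1_p$), and that for $0\leq\sigma<1$ one must genuinely use $\{p,r\}=\{q,s\}$ to collapse $R(a^1,a^2,a^3)_p$ into $a^1_p\langle a^3,a^2\rangle+a^3_p\langle a^1,a^2\rangle-a^1_p\overline{a}^2_p a^3_p$ and then bound each piece with $h^\sigma_p\hookrightarrow\ell^2_p\hookrightarrow\ell^\infty_p$; this subtlety is glossed over in the paper. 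One caveat worth noting for both you and the paper: the lemma is stated for $\sigma\in\mathbb{R}$, but for $\sigma<0$ the pairings $\langle a^i,a^j\rangle$ need not even converge when $a^i,a^j\in h^\sigma_p\setminus\ell^2_p$, so $R$ is not a bounded trilinear map on $h^\sigma_p$ in that range; your argument (like the paper's) really establishes the result for $\sigma\geq 0$, and the statement as printed appears to overreach.
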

The computations of estimate \eqref{consh1wu} imply that $R[b,b,.]$ is self-adjoint and satisfy
\begin{equation}\label{selfadR}
 \left<iR[b,b,a],a\right>_{h^1_p\times h^1_p}=0, \qquad \forall a,b\in h^1_p.
\end{equation}
More generally, the computations of \eqref{conshs} give us, for a solution $(a,b)$ of the system \eqref{redu}, $\forall\sigma\in\mathbb{R}$
\begin{equation}\label{normhsa}
 \|a(t)\|_{h^\sigma_{p}}+\|b(t)\|_{h^\sigma_{p}}=\|a_0\|_{h^\sigma_{p}}+\|b_0\|_{h^\sigma_{p}}.
\end{equation}

Then, we remark that the system \eqref{redu} is Hamiltonian, for the Hamiltonian
\begin{equation*}\label{hamil}
H:=\displaystyle\sum_{(p,q,r,s)\in\Gamma_0}a_p\overline{a}_qb_r\overline{b}_s
=\displaystyle\sum\limits_{\{p,r\}=\{q,s\}}a_p\overline{a}_qb_r\overline{b}_s.
\end{equation*}
Indeed, with $H$ thus defined, we have the infinite system:
\begin{equation*}\begin{dcases}
i\partial_t{a_j}=\dfrac{\partial H}{\partial \overline{a}_j}, \quad 
-i\partial_t{\overline{a}_j}=\dfrac{\partial H}{\partial a_j}, \quad j\in\mathbb{Z},\\
i\partial_t b_j=\dfrac{\partial H}{\partial \overline{b}_j}, 
\quad -i\partial_t{\overline{b}_j}=\dfrac{\partial H}{\partial b_j}, \quad j\in\mathbb{Z}.
\end{dcases}\end{equation*}
The associated symplectic structure is given by $-i\displaystyle\sum_{j\in\mathbb{Z}}(\mathrm{d}a_j\wedge \mathrm{d}\overline{a}_j+
\mathrm{d}b_j\wedge \mathrm{d}\overline{b}_j)$. Thus, the Poisson bracket between two functions $f$ and $g$ of $(a,\overline{a},b,\overline{b})$ is given
by 
\begin{equation*}
 \{f,g\}=-i\displaystyle\sum\limits_{j\in\mathbb{Z}}\left(\dfrac{\partial f}{\partial a_j}\dfrac{\partial g}{\partial\overline{a_j}}-
\dfrac{\partial f}{\partial \overline{a_j}}\dfrac{\partial g}{\partial a_j}+\dfrac{\partial f}{\partial b_j}\dfrac{\partial g}{\partial\overline{b_j}}-
\dfrac{\partial f}{\partial \overline{b_j}}\dfrac{\partial g}{\partial b_j}\right).
\end{equation*}
In order to rewrite the Hamiltonian, we set 
\begin{equation*}
I:=\displaystyle\sum\limits_{n\in\mathbb{Z}}=|a_n|^2,\qquad J:=\displaystyle\sum\limits_{n\in\mathbb{Z}}=|b_n|^2,\qquad 
S:=\displaystyle\sum\limits_{n\in\mathbb{Z}}=a_n\overline{b_n}.  
\end{equation*}
Therefore, considering that $\{p,r\}=\{q,s\}$ in $\Gamma_0$, we can write the Hamiltonian $H$ as
\begin{equation}\label{HIJS}
H=IJ+|S|^2-\displaystyle\sum\limits_{n\in\mathbb{Z}}|a_n|^2|b_n|^2.
\end{equation}
With this new structure, we can give a new proof of estimates \eqref{conshs} and \eqref{normhsa}, for $n\in\mathbb{N}$ we have
\begin{align*}
\partial_t(|a_n|^2+|b_n|^2)&=\{|a_n|^2,H\}+\{|b_n|^2,H\}\\
&=-i\left(\overline{a_n}\dfrac{\partial H}{\partial \overline{a_n}}-a_n\dfrac{\partial H}{\partial a_n}\right)
-i\left(\overline{b_n}\dfrac{\partial H}{\partial \overline{b_n}}-b_n\dfrac{\partial H}{\partial b_n}\right)\\
&=-i\left(\overline{a_n}b_nS-a_n\overline{b_n}\overline{S}\right)-i\left(a_n\overline{b_n}\overline{S}-\overline{a_n}b_nS\right) \\
&=0.
\end{align*}
This implies the conservation of the $H^\sigma$ norms for the reduced resonant system \eqref{redu}, but also for the resonant system \eqref{res},
where the quantities $a_n, b_n, I, J$ and $S$ depend on $\xi$ too.
We are now able to prove the existence of the beating effect.

\subsection{Example of nonlinear dynamics: the beating effect}\label{ss45}
In this subsection, we adapt the proof of the beating effect of Gr\'ebert, Paturel and Thomann in \cite{GPT} to prove Theorem \ref{beating}.
We want to highlight an exchange between two modes of the solutions. According to this idea, we introduce the reduced space
\begin{equation*}
 \mathcal{J}_{p,q}=\{(a,b),a_n=\overline{a_n}=b_n=\overline{b_n}=0, n\notin \{p,q\}\},
\end{equation*}
and we note $\tilde{H}$ the reduced Hamiltonian defined by
\begin{equation*}
\tilde{H}=H_{\mathcal{J}_{p,q}}. 
\end{equation*}
For more simplicity, we work in the symplectic polar coordinates $(I_j,J_j,\theta_j,\varphi_j)$ defined by:
\begin{equation*}
 a_j=\sqrt{I_j}e^{i\theta_j}, \qquad b_j=\sqrt{J_j}e^{i\varphi_j}.
\end{equation*}
This is a symplectic change of variables because we have the relation
\begin{equation*}
\mathrm{d}\alpha\wedge \mathrm{d}\overline{\alpha}+\mathrm{d}\beta\wedge \mathrm{d}\overline{\beta}=i\left(\mathrm{d}\theta\wedge \mathrm{d}I
+\mathrm{d}\varphi\wedge\mathrm{d}J\right).
\end{equation*}
Therefore, the expression \eqref{HIJS} of the Hamiltonian $H$ becomes for the reduced case   
\begin{align*}
\tilde{H}&=(I_p+I_q)(J_p+J_q)+(a_p\overline{b}_p+a_q\overline{b}_q)(\overline{a}_pb_p+\overline{a}_qb_q)-(a_p^2b_p^2+a_q^2b_q^2)\\
&=(I_p+I_q)(J_p+J_q)+2(I_pI_qJ_pJ_q)^\frac12\cos(\Psi_0),
\end{align*}
where $\Psi_0:=\theta_q-\theta_p+\varphi_p-\varphi_q.$
We obtain thus the following Hamiltonian system
\begin{equation}\begin{dcases}\label{hamiltheta}                               
\partial_t\theta_j&=-\dfrac{\partial\tilde{H}}{\partial I_j}, \quad \partial_t I_j=\dfrac{\partial\tilde{H}}{\partial \theta_j}, \quad \quad j=p,q,\\
\partial_t\varphi_j&=-\dfrac{\partial\tilde{H}}{\partial J_j}, \quad \partial_t J_j=\dfrac{\partial\tilde{H}}{\partial \varphi_j}, \quad \quad j=p,q.
\end{dcases}
\end{equation}
Let us show that this system is completely integrable.
\begin{lem}
The system \eqref{hamiltheta} is completely integrable. Moreover, the following change of variables is symplectic:
\begin{equation*}\begin{dcases}
 K_0&=I_q,\quad K_1=I_q+I_p,\quad K_2=J_q+J_p,\quad K_3=I_q+J_q,\\
 \Psi_0&=\theta_q-\theta_p+\varphi_p-\varphi_q,\quad \Psi_1=\theta_p,\quad \Psi_2=\varphi_p,\quad \Psi_3=\varphi_q-\varphi_p.
\end{dcases}\end{equation*} 
\end{lem}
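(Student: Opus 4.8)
The plan is to reduce the motion on the $8$-dimensional reduced phase space $\mathcal{J}_{p,q}$ to a one–degree–of–freedom system: besides $\tilde H$ I will exhibit three more integrals of motion that Poisson–commute with $\tilde H$ and with one another and are functionally independent on an open dense set, so that the Liouville--Arnold theorem applies. These integrals will turn out to be exactly the actions $K_1,K_2,K_3$ of the stated change of variables, which is also why that change of variables is the natural one.

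\emph{Step 1: the change of variables is symplectic.} Since the $K_i$ are linear in the actions $(I_p,I_q,J_p,J_q)$ and the $\Psi_i$ are linear in the angles $(\theta_p,\theta_q,\varphi_p,\varphi_q)$, it suffices to check that the Liouville one‑form is preserved, namely
\[
I_p\,\mathrm{d}\theta_p+I_q\,\mathrm{d}\theta_q+J_p\,\mathrm{d}\varphi_p+J_q\,\mathrm{d}\varphi_q
= K_0\,\mathrm{d}\Psi_0+K_1\,\mathrm{d}\Psi_1+K_2\,\mathrm{d}\Psi_2+K_3\,\mathrm{d}\Psi_3 .
\]
First I would invert the linear relations: $I_q=K_0$, $I_p=K_1-K_0$, $J_q=K_3-K_0$, $J_p=K_2-K_3+K_0$, and $\theta_p=\Psi_1$, $\varphi_p=\Psi_2$, $\varphi_q=\Psi_2+\Psi_3$, $\theta_q=\Psi_0+\Psi_1+\Psi_3$. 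Substituting $\mathrm{d}\theta_p=\mathrm{d}\Psi_1$, $\mathrm{d}\theta_q=\mathrm{d}\Psi_0+\mathrm{d}\Psi_1+\mathrm{d}\Psi_3$, $\mathrm{d}\varphi_p=\mathrm{d}\Psi_2$, $\mathrm{d}\varphi_q=\mathrm{d}\Psi_2+\mathrm{d}\Psi_3$ into the left-hand side and collecting the coefficients of $\mathrm{d}\Psi_0,\dots,\mathrm{d}\Psi_3$ reproduces exactly $K_0,K_1,K_2,K_3$, which gives the identity. Taking exterior derivatives then shows the symplectic form $\sum_{j\in\{p,q\}}(\mathrm{d}I_j\wedge\mathrm{d}\theta_j+\mathrm{d}J_j\wedge\mathrm{d}\varphi_j)$ is carried to $\sum_{i=0}^{3}\mathrm{d}K_i\wedge\mathrm{d}\Psi_i$, so $(K_i,\Psi_i)_{i=0}^{3}$ are canonical coordinates.

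\emph{Step 2: rewriting $\tilde H$ and concluding.} Using $I_p+I_q=K_1$, $J_p+J_q=K_2$ and $I_pI_qJ_pJ_q=K_0(K_1-K_0)(K_3-K_0)(K_2-K_3+K_0)$, the expression for $\tilde H$ obtained just above becomes
\[
\tilde H = K_1K_2+2\bigl[K_0(K_1-K_0)(K_3-K_0)(K_2-K_3+K_0)\bigr]^{1/2}\cos\Psi_0 ,
\]
which depends only on $K_0,K_1,K_2,K_3,\Psi_0$. Hence $\Psi_1,\Psi_2,\Psi_3$ are cyclic, so $K_1,K_2,K_3$ are first integrals of \eqref{hamiltheta}; they Poisson-commute with each other (being action coordinates) and with $\tilde H$, since $\{\tilde H,K_i\}=\partial\tilde H/\partial\Psi_i=0$ for $i=1,2,3$. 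The differentials $\mathrm{d}\tilde H,\mathrm{d}K_1,\mathrm{d}K_2,\mathrm{d}K_3$ are linearly independent wherever $\partial\tilde H/\partial\Psi_0\neq0$, i.e. on the open dense set $\{\sin\Psi_0\neq0\}\cap\{K_0(K_1-K_0)(K_3-K_0)(K_2-K_3+K_0)\neq0\}$, because $\tilde H$ is the only one of the four functions involving $\Psi_0$. Four independent integrals in involution on an $8$-dimensional symplectic manifold give complete integrability by Liouville--Arnold; equivalently, in the $(K_i,\Psi_i)$ coordinates the flow collapses to the single conjugate pair $(K_0,\Psi_0)$ governed by the one–degree–of–freedom Hamiltonian $\tilde H$, which is integrable by energy conservation alone.

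\emph{Main obstacle.} There is no real analytic difficulty; the only places that require care are the bookkeeping in Step 1 (inverting the linear change and verifying the one-form identity coefficient by coefficient, with the sign convention for the symplectic form fixed earlier), and, in Step 2, stating precisely the generic set on which the four integrals are functionally independent. The substantive point is simply that the flat-rectangle structure of $\Gamma_0$ together with the $\mathrm{U}(1)$-type symmetries of $H$ already furnishes enough commuting conserved quantities.
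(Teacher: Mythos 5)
Your proof is correct and follows essentially the same route as the paper: exhibit $K_1,K_2,K_3$ and $\tilde H$ as four independent commuting integrals and verify the $(K_i,\Psi_i)$ change of variables is canonical. The only cosmetic differences are that you establish symplecticity first (via the Liouville one-form rather than the paper's direct two-form identity) and then read off conservation of $K_1,K_2,K_3$ from cyclicity of $\Psi_1,\Psi_2,\Psi_3$, whereas the paper computes the Poisson brackets $\{K_i,\tilde H\}$ directly before checking the change of variables.
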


\begin{proof}
First, we easily see that $K_1, K_2$ and $K_3$ are constants of motion. Indeed, we can for example check for $K_1$ that 
\begin{align*}
\left\{K_1,\tilde{H}\right\}&=\sum\limits_{j=p,q}\left(\dfrac{\partial K_1}{\partial I_j}\dfrac{\partial \tilde{H}}{\partial \theta_j}-\dfrac{\partial K_1}
{\partial \theta_j}\dfrac{\partial \tilde{H}}{\partial I_j}+\dfrac{\partial K_1}{\partial J_j}\dfrac{\partial \tilde{H}}{\partial \varphi_j}-\dfrac{\partial K_1}
{\partial \varphi_j}\dfrac{\partial \tilde{H}}{\partial J_j}\right) \\
&=\dfrac{\partial \tilde{H}}{\partial \theta_p}+\dfrac{\partial \tilde{H}}{\partial \theta_q}=2\sqrt{I_pI_qJ_pJ_q}(\sin\psi_0-\sin\psi_0)=0.
\end{align*}
Then, $K_1,K_2$ and $K_3$ are independent of the angles $\theta_p, \theta_q, \varphi_p$ and $\varphi_q$. Therefore, they are in involution:
\begin{equation*}
\left\{K_1,K_2\right\}=\left\{K_1,K_3\right\}=\left\{K_2,K_3\right\}=0. 
\end{equation*}
Finally, $K_1,K_2$ and $K_3$ are  clearly independent (they do not depend on the same actions) and are independent with $\tilde{H}$ which is the only
to depend on an angle. Thus, the system is completely integrable.

Concerning the change of variables, we have
\begin{align*}
\mathrm{d}\Psi\wedge\mathrm{d}K&=\mathrm{d}\Psi_0\wedge\mathrm{d}K_0+\mathrm{d}\Psi_1\wedge\mathrm{d}K_1
+\mathrm{d}\Psi_2\wedge\mathrm{d}K_2+\mathrm{d}\Psi_3\wedge\mathrm{d}K_3\\
&=\mathrm{d}I_q\wedge \mathrm{d}(\theta_q-\theta_p+\varphi_p-\varphi_q+\theta_p+\varphi_q-\varphi_p)
+\mathrm{d}I_p\wedge \mathrm{d}\theta_p+\mathrm{d}J_p\wedge \mathrm{d}\varphi_p+\mathrm{d}J_q\wedge\mathrm{d}(\varphi_p+\varphi_q-\varphi_p)\\
&=dI\wedge d\theta+dJ\wedge d\varphi.
\end{align*}
Thus, the change of variables is symplectic.
\end{proof}

\begin{rem}
The fact that the system \eqref{hamiltheta} is integrable is not a surprise. Indeed, we can remark that the Hamiltonian $\tilde{H}$ depends only on one angle 
$(\Psi_0)$. This gives us three constants of motion, which with the Hamiltonian itself form a family of four independent constants of motions in involution. 
Therefore, the system \eqref{hamiltheta} is completely integrable. 
Another easy way to check this integrability is to consider the conservations of the mass and moment of the solutions,
given by the conservation of the $h^1$ norm of the solution thanks to equation \eqref{selfadR}. Therefore, the goal of this lemma is not to prove that the 
system is integrable, but to find a new explicit set of variable in order to obtain a simpler system.
\end{rem}

In this new system of coordinates, the Hamiltonian becomes 
\begin{equation*}
\tilde{H}=\tilde{H}(\Psi_0,K_0,K_1,K_2,K_3)=K_1K_2+2\left(K_0(K_1-K_0)(K_3-K_0)(K_2-K_3+K_0)\right)^\frac12\cos(\Psi_0).
\end{equation*}
$K_1,K_2$ and $K_3$ and constants of motion. Thus, for initial data of the size $\varepsilon$, we can fixed
\begin{equation*}
K_1=K_2=K_3=\varepsilon^2. 
\end{equation*}
Thus, we obtain a new Hamiltonian $\tilde{H}_0$ defined by
\begin{equation*}
\tilde{H}_0(\Psi_0,K_0)=\tilde{H}(\Psi_0,K_0,\varepsilon^2,\varepsilon^2,\varepsilon^2)=\varepsilon^4+2K_0(\varepsilon^2-K_0)\cos(\Psi_0).
\end{equation*}
To deal with all the $\varepsilon$ terms in the right-hand side of the previous equation, we make a change of unknown by setting
\begin{equation*}
\Psi_0(t)=:\Psi(\varepsilon^2t), \quad K_0(t)=:\varepsilon^2K(\varepsilon^2t).
\end{equation*}
We obtain the system:
\begin{equation}\label{Hstar}
\begin{dcases}                              
\dot{\Psi}&=2(2K-1)\cos\Psi=-\dfrac{\partial H_{\star}}{\partial K},\\
\dot{K}&=2K(K-1)\sin\Psi=\dfrac{\partial H_{\star}}{\partial \Psi},
\end{dcases}
\end{equation}
where the new Hamiltonian $H_{\star}$ is defined by
\begin{equation*}                              
H_{\star}=H_{\star}(\Psi,K):=2K(1-K)\cos \Psi.
\end{equation*}
Thanks to the conservation of the Hamiltonian, we can check that the velocity vector never cancels (for a good choice 
of initial data). Therefore we obtain periodic solutions, and the symmetries 
\begin{equation*}
 H_{\star}(-\Psi,K)=H_{\star}(\Psi,K), \quad H_{\star}(\Psi,1-K)=H_{\star}(\Psi,K),
\end{equation*}
give us informations on the solutions $(\Psi,K)$. Another way to check this periodicity of the solutions is to 
draw the phase portrait of the system defined by \eqref{Hstar}:
\begin{figure}[!ht]
\caption{Phase portrait of the Hamiltonian}
 \begin{center}
\includegraphics[scale=0.45]{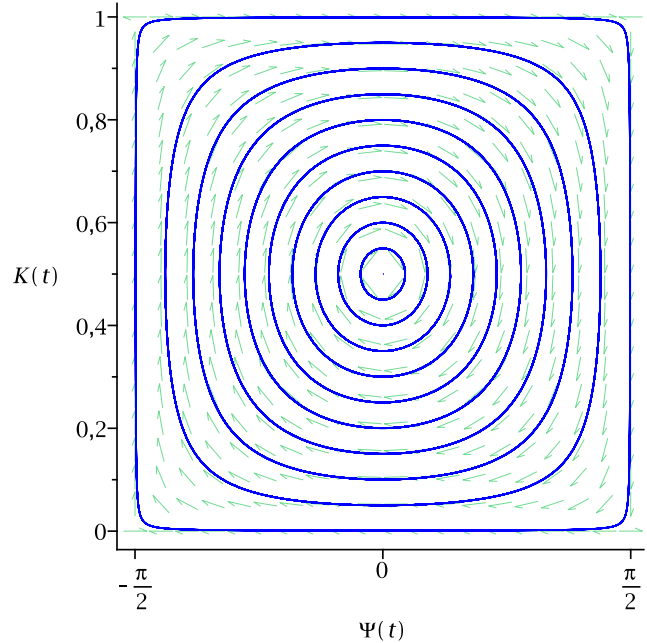}
\end{center}
\end{figure}
     
From this phase portrait, we deduce the following lemma:

\begin{lem}\label{lemphasport}
For all $\gamma\in(0,\frac12)$, there exists a time $T_\gamma$ such that
the system \eqref{Hstar} admits an orbit of period~$2T_\gamma$ with 
\begin{equation*}\begin{dcases}                               
(\Psi_\gamma(0),K_\gamma(0))=(0,\gamma),\\
(\Psi_\gamma(T_\gamma),K_\gamma(T_\gamma))=(0,1-\gamma).
\end{dcases}
\end{equation*} 
\end{lem}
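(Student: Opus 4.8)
The plan is to analyze the one-degree-of-freedom Hamiltonian system \eqref{Hstar} directly via its phase portrait, exploiting the conserved quantity $H_{\star}(\Psi,K) = 2K(1-K)\cos\Psi$ and the two symmetries $H_{\star}(-\Psi,K) = H_{\star}(\Psi,K)$ and $H_{\star}(\Psi,1-K) = H_{\star}(\Psi,K)$ noted just above. First I would fix $\gamma \in (0,\tfrac12)$ and consider the energy level $E_\gamma := H_{\star}(0,\gamma) = 2\gamma(1-\gamma) > 0$. Since $\gamma \in (0,\tfrac12)$, we have $0 < E_\gamma < \tfrac12$, so the level set $\{H_{\star} = E_\gamma\}$ lies strictly between the separatrix energy $0$ (the set $\{K \in \{0,1\}\} \cup \{\Psi \in \tfrac\pi2 + \pi\mathbb{Z}\}$) and the maximal energy $\tfrac12$ attained only at the center $(\,\cdot\,, \tfrac12)$ with $\cos\Psi = 1$. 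Consequently the connected component of $\{H_{\star} = E_\gamma\}$ through the point $(0,\gamma)$ is a smooth closed curve encircling a center, and on it the velocity field $(\dot\Psi, \dot K) = \big(2(2K-1)\cos\Psi,\ 2K(K-1)\sin\Psi\big)$ never vanishes: a zero would force either $K = \tfrac12$ with $\sin\Psi = 0$ (energy $\tfrac12 \neq E_\gamma$) or $\cos\Psi = 0$ with $K(K-1) = 0$ (energy $0 \neq E_\gamma$) or $K \in \{0,1\}$ (energy $0$), all excluded. Hence the orbit through $(0,\gamma)$ is a genuine periodic orbit; call its minimal period $2T_\gamma$.

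Next I would pin down the two prescribed points on this orbit. Along the orbit, the curve $\{2K(1-K)\cos\Psi = E_\gamma\}$ meets the line $\Psi = 0$ exactly where $2K(1-K) = E_\gamma$, i.e. where $K^2 - K + \tfrac{E_\gamma}{2} = 0$, whose two roots are $K = \gamma$ and $K = 1-\gamma$ (by construction $E_\gamma = 2\gamma(1-\gamma)$). So the orbit crosses $\{\Psi = 0\}$ precisely at $(0,\gamma)$ and $(0, 1-\gamma)$. Starting from $(0,\gamma)$, where $\dot K = 2\gamma(\gamma-1)\cdot 0 = 0$ and $\dot\Psi = 2(2\gamma-1) < 0$, the trajectory moves off into $\Psi < 0$; by the $\Psi \mapsto -\Psi$ symmetry (which reverses time) together with the closedness of the orbit, the trajectory must return to the axis $\Psi = 0$, and the only other intersection point is $(0, 1-\gamma)$. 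Let $T_\gamma > 0$ be the first time this happens; then $(\Psi_\gamma(T_\gamma), K_\gamma(T_\gamma)) = (0, 1-\gamma)$. Applying the symmetry $K \mapsto 1-K$ (which also reverses the sign of $\dot\Psi$ at turning points and preserves $H_{\star}$) one checks that the return leg from $(0,1-\gamma)$ back to $(0,\gamma)$ takes the same time $T_\gamma$, so the full period is indeed $2T_\gamma$, consistent with the minimal period identified above. This establishes the displayed conditions
\begin{equation*}
(\Psi_\gamma(0), K_\gamma(0)) = (0,\gamma), \qquad (\Psi_\gamma(T_\gamma), K_\gamma(T_\gamma)) = (0, 1-\gamma),
\end{equation*}
and since $K$ solves the autonomous ODE $\dot K = 2K(K-1)\sin\Psi$ with right-hand side vanishing at $K=0,1$, uniqueness for ODEs keeps $K_\gamma(t) \in (0,1)$ for all $t$, i.e. $K_\gamma : \mathbb{R} \to (0,1)$ as required.

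The main obstacle I anticipate is not the qualitative phase-portrait reasoning, which is routine once the energy bounds $0 < E_\gamma < \tfrac12$ are in hand, but rather making rigorous the claim that $T_\gamma \lesssim |\ln\gamma|$ (needed in Theorem \ref{beating}, though stated there, not in this lemma). That estimate comes from the fact that as $\gamma \to 0$ the orbit approaches the separatrix through the hyperbolic point, where the period blows up logarithmically; quantitatively one writes $T_\gamma = \int_\gamma^{1-\gamma} \frac{dK}{|\dot K|}$ along the $\Psi = 0$-to-$\Psi = 0$ arc, uses the energy relation to express $\sin\Psi$ in terms of $K$ and $E_\gamma$, and extracts the logarithmic divergence from the behavior of the integrand near $K = 0$ and $K = 1$. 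For the present lemma, however, only the existence of the periodic orbit and the two boundary values are asserted, so it suffices to invoke the smoothness and non-degeneracy of the level set $\{H_{\star} = E_\gamma\}$ together with the two symmetries, and no delicate quantitative work is needed.
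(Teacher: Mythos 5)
Your proof is correct and follows the same approach as the paper: conservation of the Hamiltonian $H_\star$, identification of the closed level curve through $(0,\gamma)$ where the vector field never vanishes, and the two symmetries of $H_\star$. The paper states this only in sketch form via the phase portrait and figure; your argument makes the same reasoning precise (in particular, the verification that $2K(1-K)=E_\gamma$ has roots exactly $\gamma$ and $1-\gamma$, and the careful case analysis ruling out zeros of the velocity field on the level set).
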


\begin{rem}
We can show that the period $T_\gamma$ satisfies the bound
\begin{equation*}
 0<T_\gamma\lesssim |\ln(\gamma)|.
\end{equation*}
We refer to \cite[Theorem 1.1]{GPT} for the proof of this estimate.
\end{rem}

From Lemma \ref{lemphasport}, we have the beating effect in the following sense: 
for any couple of different integers $(p,q)$, there exists a solution
$(a,b)$ of the reduced resonant system \eqref{redu} such that
\begin{equation*}\begin{dcases}                               
|b_p(t)|^2=|a_q(t)|^2=\varepsilon^2K_\gamma(\varepsilon^2t),\\
|a_p(t)|^2=|b_q(t)|^2=\varepsilon^2(1-K_\gamma(\varepsilon^2t)).
\end{dcases}
\end{equation*}
Indeed, we can for example check the identity for $a_p$ by writing
\begin{equation*}
 |a_p(t)|^2=I_p(t)=K_1-K_0(t)=\varepsilon^2(1-K(\varepsilon^2t)).
\end{equation*}
We can remark that we have a $\varepsilon^2$ factor in the right-hand side which is not present in \cite{GPT}.
But in this article, this factor is present in the system.

\subsection{Beating effect and wave operator: proof of Theorem \ref{beating}}\label{ss46}

Let $I\subset\mathbb{R}$ be a bounded open interval, $(p,q)$ a couple of different integers and $0<\gamma<\frac12$.
We chose:
\begin{itemize}
 \item $\varepsilon>0$ small enough.
 \item $(a,b)$ a solution of the reduced resonant system \eqref{redu} with initial data
\begin{equation*}\begin{dcases}
  a_p(0)=b_q(0)=\varepsilon^2\gamma, \\   
  b_p(0)=a_q(0)=\varepsilon^2(1-\gamma),
\end{dcases}\end{equation*}
 that enjoys a beating effect between the modes $p$ and $q$, as constructed in the 
previous subsection.
 \item $\varphi\in\mathcal{S}(\mathbb{R})$ a compact supported function such that $\varphi\equiv1$ on $I$.
\end{itemize}
As in Subsection \ref{ss43}, we construct from $(a,b)$ a solution of the resonant system by choosing the initial data
\begin{equation}\begin{dcases}\label{defWU0beating}
  W_{U,0}=\check{\varphi}(x)\left(a_p(0)e^{ipy}+a_q(0)e^{iqy}\right)=\check{\varphi}(x)\varepsilon^2\left(\gamma e^{ipy}+(1-\gamma)e^{iqy}\right), \\   
  W_{V,0}=\check{\varphi}(x)\left(b_p(0)e^{ipy}+b_q(0)e^{iqy}\right)=\check{\varphi}(x)\varepsilon^2\left((1-\gamma) e^{ipy}+\gamma e^{iqy}\right).
\end{dcases}\end{equation}
By construction, the solution $(W_U,W_V)$ of the resonant system \eqref{res} with initial data $(W_{U,0},W_{V,0})$ satisfies
\begin{equation*}
 \mathcal{F}(W_{U})(t,\xi,k)=\varphi(\xi)a_k\left(\varphi(\xi)^2t\right),\qquad \qquad \mathcal{F}(W_{V})(t,\xi,k)=\varphi(\xi)b_k\left(\varphi(\xi)^2t\right).
\end{equation*}
By the choice of the solution $(a,b)$, and the fact that $\varphi\equiv1$ on $I$, this completes the proof of the first and second parts of Theorem \ref{beating}. 
For the third part of the theorem, in order to apply the modified wave operator from Section~\ref{modanswav}, we have to check that for $\varepsilon$ small enough, 
the constructed initial data $(W_{U,0},W_{V,0})$ satisfies
\begin{equation}\label{assumptionstocheck}
\|W_{U,0}\|_{S^+}+\|W_{V,0}\|_{S^+}\leq\varepsilon.
\end{equation}
For example, for $W_{U,0}$, we have by \eqref{defWU0beating}:
\begin{equation*}
 \left|\mathcal{F}(W_{U})(t,\xi,k)\right| \leq\varepsilon^2|\varphi(\xi)| \quad \text{if } k\in{p,q} \quad,  
 \left|\mathcal{F}(W_{U})(t,\xi,k)\right|=0 \quad \text{else}.
\end{equation*}
Therefore, using that:
\begin{itemize}
 \item there is only two modes for the periodic variable,
 \item $\varphi\in\mathcal{S}(\mathbb{R})$ is a compact supported function,
 \item there is a $\varepsilon^2$ factor and $\varepsilon$ is small enough,
\end{itemize}
the proof of \eqref{assumptionstocheck} and thus of Theorem \ref{beating} is completed.
Indeed, we recall that 
\begin{align*}
\|W_{U,0}\|_{S^+}&:=\|W_{U,0}\|_{H^N_{x,y}}+\|xW_{U,0}\|_{L^2_{x,y}}
+\|xW_{U,0}\|_{H^N_{x,y}}+\|x^2W_{U,0}\|_{L^2_{x,y}}\\
&\qquad+\|(1-\partial_{xx})^4W_{U,0}\|_{H^N_{x,y}}+\|x(1-\partial_{xx})^4W_{U,0}\|_{L^2_{x,y}}.
\end{align*}
For example, we can check that for $\varepsilon$ small enough,
\begin{align*}
 \|x(1-\partial_{xx})^4W_{U,0}\|^2_{L^2_{x,y}}&=\int_{\mathbb{R}}
 \varepsilon^4(\gamma^2+(1-\gamma)^2)\left|\partial_\xi\left((1+\xi)^4\varphi(\xi)\right)\right|^2\mathrm{d}\xi\leq \frac{\varepsilon^2}{144}.
\end{align*}

\qed

\subsection{Persistence of the beating effect and convolution potentials}\label{ss47}

Going back to the single cubic Schr\"odinger equation on $\mathbb{R}\times\mathbb{T}^d$:
\begin{equation*}
 i\partial_tU+\Delta_{\mathbb{R}\times\mathbb{T}^d}U=|U|^2U,
\end{equation*}
the method of Hani, Pausader, Tzvetkov and Visciglia in \cite{HPTV} allows to construct solutions that provide a growth of the Sobolev norms when $2\leq d\leq4$.
The idea is to find some particular solutions of the resonant equation and then to use the wave operator theorem.

In order to perturb the eigenvalues of the Laplacian, we can add a convolution potential:
\begin{equation*}
 i\partial_tU+\Delta_{\mathbb{R}\times\mathbb{T}^d}U+V\star U=|U|^2U.
\end{equation*}
In this case, Gr\'ebert, Paturel and Thomann show in \cite{GPT2} that for generic choice of convolution potential, the resonances are killed. 
For these potentials, there is no more any growth of the Sobolev norms. More precisely, the Sobolev norms of small solutions are asymptotically constant.

The natural question is thus to transpose the question about potentials to the system case. Are the potentials generically killing the beating effect ?
Are the Sobolev norms of the couples of solutions asymptotically constant ? The answer of the first question is given by a simple observation.
The key argument in \cite{GPT2} to kill the resonances and the growth of the Sobolev norms is to perturb the equation, thanks to the potentials, in order to 
obtain a smaller resonant set 
\begin{equation*}
\Gamma_{0,conv,d}=\left\{(p,q,r,s)\in(\mathbb{Z}^d)^4\;:\;p-q+r-s=0,\;\{|p|,|r|\}=\{|q|,|s|\}\right\}. 
\end{equation*}
In our case, we already have this relation. Indeed, we have 
\begin{equation*}
 \Gamma_{0}=\left\{(p,q,r,s)\in\mathbb{Z}^4\;:\;\{p,r\}=\{q,s\}\right\}=\Gamma_{0,conv,1}.
\end{equation*}
Therefore, we can conclude that the add of convolution potentials generically doesn't kill the beating effect. 
Thus, why are the Sobolev norms constant in \cite{GPT2} and not in our coupled system case ? 
This is just a consequence of the coupled effect: what we have in our case is the fact that the sums of the Sobolev norms of the couples of solutions
are asymptotically constant, which is the system equivalent of the result of~\cite{GPT2}.

\bigskip

\textit{Acknowledgments}: The author deeply thanks Beno\^it Gr\'ebert and Laurent~Thomann for several helps, advices and
their many suggestions. The author also would like to warmly thank Fr\'ed\'eric Bernicot and Olivier Pierre for several useful discussions. 

\bibliographystyle{abbrv}
\bibliography{bibli}

\end{document}